\newtheorem{remark}[theorem]{Remark}
\def\fin{\ifmmode{\Large$\diamond$}\else{\unskip\nobreak\hfil
    \penalty50\hskip1em\null\nobreak\hfil{\Large$\diamond$}
    \parfillskip=0pt\finalhyphendemerits=0\endgraf}\fi}
\def\be#1#2\ee{\begin{equation}\label{eq:#1}#2\end{equation}}
\def\req#1{{\rm(\ref{eq:#1})}}
\def\bdm  {\begin{displaymath}}
  \def\edm  {\end{displaymath}}
\def\bdmal{\begin{displaymath}\begin{aligned}}
    \def\edmal{\end{aligned}\end{displaymath}}
\mathchardef\Phi="0108
\mathchardef\Psi="0109
\mathchardef\Deltar="0101
\mathchardef\Omega="010A
\newcommand{\Schwartz}{{\mathscr{S}\!}}
\newcommand{\A}{\boldsymbol{A}}
\newcommand{\D}{\boldsymbol{D}}
\newcommand{\I}{\boldsymbol{I}}
\renewcommand{\O}{\boldsymbol{O}}
\newcommand{\F}{\boldsymbol{F}}
\renewcommand{\H}{\boldsymbol{H}}
\newcommand{\U}{\boldsymbol{U}}
\newcommand{\W}{\boldsymbol{W}}
\newcommand{\T}{\boldsymbol{T}}
\newcommand{\phihat}{{\widehat \varphi}}
\newcommand{\utilde}{{\widetilde u}}
\newcommand{\chat}{{\widehat c}}
\newcommand{\dhat}{{\widehat d}}
\newcommand{\what}{{\widehat w}}
\newcommand{\fhat}{{\widehat f}}
\newcommand{\hhat}{{\widehat h}}
\newcommand{\uhat}{{\widehat u}}
\newcommand{\V}{{{\mathscr V}}}
\renewcommand{\L}{{\mathscr L}}
\newcommand{\R}{{\mathord{\mathbb R}}}
\newcommand{\Wrho}{{{\mathcal W}_\varrho}}
\newcommand{\Ws}{W_\Sigma}
\newcommand{\Wsk}{W_{\Sigma,k}}
\newcommand{\uu}{{\boldsymbol{u}}}
\newcommand{\vv}{{\boldsymbol{v}}}
\newcommand{\ww}{{\boldsymbol{w}}}
\renewcommand{\gg}{{\boldsymbol{g}}}
\renewcommand{\ll}{\mbox{\mathversion{bold}$\ell$\mathversion{normal}}}
\newcommand{\Interval}{{\cal I}}
\newcommand{\Lrho}{{L_\varrho^\infty}}
\newcommand{\Lrhorrr}{{L_\varrho^\infty(\R^3)}}
\newcommand{\uHNC}{u_{\text{\tiny HNC}}}
\newcommand{\uLJ}{u_{\text{\tiny LJ}}}
\newcommand{\FLDL}{G_{\text{\tiny LDL}}}
\newcommand{\FLDLp}{G'_{\text{\tiny LDL}}}
\newcommand{\norm}[1]{\|#1\|}
\newcommand{\kB}{k_{\rm B}}
\newcommand{\Angstroem}{{\rm \AA}}
\newcommand{\kPa}{{\rm kPa}}
\newcommand{\gcm}{{\rm g}/{\rm cm}}
\newcommand{\rmd}{\,\mathrm{d}}
\newcommand{\dR}{\rmd\!R}
\newcommand{\dr}{\rmd r}
\newcommand{\domega}{\rmd\omega}
\newcommand{\rmi}{\mathrm{i}}
\newcommand{\eps}{\varepsilon}
\DeclareMathOperator*{\esssup}{ess\,sup}
\def\req#1{{\rm(\ref{eq:#1})}}
\newcommand{\dupdots}{\mathinner{\mkern1mu\raise\p@
    \vbox{\kern7\p@\hbox{.}}\mkern2mu
    \raise4\p@\hbox{.}\mkern2mu\raise7\p@\hbox{.}\mkern1mu}}
\newenvironment{cmatrix}{\left[\cmatrixc}{\endmatrix\right]}
\title{A generalized Newton iteration for computing the solution of the
inverse Henderson problem}
\author{Fabrice Delbary, Martin Hanke, and 
        Dmitry Ivanizki\thanks{Institut f\"ur Mathematik, Johannes
    Gutenberg-Universit\"at Mainz, 55099 Mainz, Germany.
    The research leading to this work has been done within the 
    Collaborative Research Center TRR 146; corresponding funding 
    by the DFG is gratefully acknowledged.}}
\begin{document}
\maketitle
\sloppy

\begin{abstract}
We develop a generalized Newton scheme IHNC for the construction of 
effective pair potentials for systems of interacting point-like particles.
The construction is made in such a way that the distribution
of the  particles matches a given radial distribution function. The IHNC
iteration uses the hypernetted-chain integral equation for an approximate 
evaluation of the inverse of the Jacobian of the forward operator.

In contrast to the full Newton method realized in the Inverse Monte Carlo (IMC)
scheme, the IHNC algorithm requires only a single molecular dynamics 
computation of the radial distribution function per iteration step, 
and no further expensive cross-correlations. Numerical experiments are shown 
to demonstrate that the method is as efficient as the IMC scheme, and that it 
easily allows to incorporate thermodynamical constraints.
\end{abstract}

\begin{keywords}
  Coarse-graining, radial distribution function, effective potential,
  Iterative Boltzmann Inversion, Inverse Monte Carlo
\end{keywords}

\begin{AMS}
  {\sc 65Z05, 82B21}
\end{AMS}

\section{Introduction}
\label{Sec:intro}
A common problem in material science is the quantification of
interactions between a set of given particles. For example,
in computer simulations of complex materials, where all sorts of
numerical multiscale techniques are inevitable tools to
treat relevant timescales and/or spatial resolutions
(cf., e.g., Potestio, Peter, and Kremer~\cite{PPK14}),
larger atomistic structures are often replaced by artificial particles,
so-called \emph{beads}, and the simulation of these beads requires the 
knowledge of effective interactions between them and other molecules or atoms.

In the simplest case one may assume that the beads are point-like 
particles, whose interactions are governed by a potential $u=u(r)$,
which only depends on the distance $r>0$ of each interacting pair of particles
and vanishes in the limit $r\to \infty$.
According to Henderson~\cite{Hend74} such a pair potential $u=u(r)$ 
is uniquely determined by the so-called \emph{radial distribution function} 
$g=g(r)$, which measures the number of particle pairs with a given distance 
in a homogeneous fluid in thermal equilibrium.  
The \emph{inverse Henderson problem} of computing the pair potential
from the given radial distribution is therefore exactly what needs to be solved
in order to settle the aforementioned problem in physical chemistry.

One of the difficulties with this problem is the fact that the associated map
\be{G}
   G\,:\, u\,\mapsto\, g\,,
\ee
which takes the pair potential onto the corresponding radial distribution 
function (for specified values of density and temperature of the fluid) 
is not given in
closed terms, but has to be evaluated numerically, using expensive molecular
dynamics simulations. It goes without saying that the inverse map $G^{-1}$ is 
not known, either. Methods for solving the inverse Henderson problem therefore
can be distinguished in two classes: one class uses closed form approximations
of $G$ or $G^{-1}$, respectively, most notably the hypernetted-chain or the 
Percus-Yevick approximations, cf., e.g., Ben-Naim~\cite{BenN06} or 
Hansen and McDonald~\cite{HaMcD13}; the other class uses iterative schemes 
which start from a certain educated guess $u_k$ of $u$, 
simulate the corresponding radial distribution function $g_k=G(u_k)$
and use this information to determine an improved approximation $u_{k+1}$ by 
some sophisticated update rule, proceeding in this manner until convergence. 
Most prominent representatives of 
the latter class are the Iterative Boltzmann Inversion (IBI) or 
Inverse Monte Carlo (IMC); cf., e.g., Mirzoev and Lyubartsev~\cite{MiLy13},
R\"uhle et al~\cite{RJLKA09}, or T{\'o}th~\cite{Toth07}.

In this paper we suggest a new method of the second class which, we believe,
combines the advantages of the two aforementioned schemes, 
namely the simplicity and robustness of IBI, and the rapid convergence of IMC 
for an appropriate initial guess. Our method is a generalized Newton 
iteration -- as opposed to IMC, which corresponds to the much more expensive
full Newton scheme for inverting \req{G} --
and we use the hypernetted-chain approximation to compute a simplified
derivative of $G$. We show by numerical examples for simulated
and measured radial distribution data that the method outperforms IBI and 
requires about the same number of iterations as does IMC, 
even when the density and the temperature of the fluid are
near a phase transition. We also demonstrate how to include 
thermodynamical constraints like a known value for the pressure of the system
into our scheme.
In this work we only treat the case of a homogeneous fluid of single particles;
we plan to show in a forthcoming paper how to extend the method to
binary mixtures.

The outline of this paper is as follows. In the following section we 
briefly summarize the necessary ingredients from statistical mechanics which
are fundamental for this work. Then, in Section~\ref{Sec:iH}, we derive the
approximation of the inverse of the Jacobian of $G$ which will be used
for our generalized Newton scheme. Section~\ref{Sec:well-posedness} presents
the mathematical core of this paper and is concerned with the well-posedness 
of different variants of our algorithm. Readers who are only interested in
the algorithms and in implementation details can skip this part without any
loss. In the subsequent two sections we then 
discuss numerical realizations and further extensions of these schemes; 
in particular, we show in Section~\ref{Sec:extensions} how to incorporate 
pressure constraints. Finally, numerical results for some benchmark systems 
are presented in Section~\ref{Sec:Numerics}. In an appendix we include a proof
for an extension of the classical Wiener lemma 
(cf., e.g., J\"orgens~\cite{Joer82}) to some weighted $L^\infty$ space,
which is needed for our mathematical analysis.

\section{Setting of the problem}
\label{Sec:setting}
Consider an ensemble of identical classical point-like particles 
in thermodynamical equilibrium, where the interaction of the particles 
is given in terms of a pair potential $u:\R^+\to\R$ 
of \emph{Lennard-Jones type}, i.e., there exist 
a \emph{core radius} $r_0>0$ and a parameter $\alpha>3$ such that 
\be{LJtype}
\begin{aligned}
    u(r)\phantom{|} &\,\geq\, a r^{-\alpha}\,, \quad &r&\leq r_0\,,\\[1ex]
   |u(r)| &\,\leq\, b r^{-\alpha}\,, \quad &r&\geq r_0\,, 
\end{aligned}
\ee
for suitable constants $a,b>0$. We assume that the number of particles and
the size of the spatial domain under consideration is so big that one can
treat this ensemble in the thermodynamical limit, i.e., as if it fills the
full space $\R^3$.
For our mathematical analysis 
we further assume that the counting density $\rho_0>0$ of the
ensemble is sufficiently small and the temperature $T>0$ is sufficiently large,
so that the system is in its so-called \emph{gas phase}, 
cf., e.g., Ruelle~\cite[p.~84]{Ruel69}.

The radial distribution function $g:\R^+\to\R^+$, referred to in the
introduction, measures the number of particle pairs at distance $r>0$, 
normalized in such a way that $g(r)\to 1$ as $r\to\infty$; 
see \cite{HaMcD13} for the precise definition of this function.
Then, as shown in \cite{Hank18a}, the map $G$ of \req{G},
which takes $u$ onto $g$, is well-defined and differentiable
in a certain neighborhood of $u$ with respect to the Banach space $\V$ of
perturbations $v$ of $u$, for which the corresponding norm 
\be{Vd}
   \norm{v}_\V \,=\, 
   \max\bigl\{ \norm{v/u}_{(0,r_0]}, \norm{\varrho v}_{[r_0,\infty)} \bigr\}
\ee
is sufficiently small\footnote{If $\Interval\subset\R$ denotes a real
interval then $\norm{\,\cdot\,}_\Interval$ refers to 
the supremum norm of real functions defined on this respective interval.};
here,
\be{walpha}
   \varrho(r) \,=\, (1+r^2)^{\alpha/2}\,, \qquad r\geq 0\,,
\ee
is a weight function associated with the parameter $\alpha$ of \req{LJtype}.

In \cite{Hank18c} it has been shown
that the so-called \emph{pair correlation function} $h=g-1$ for
a Lennard-Jones type pair potential given by \req{LJtype} belongs to the 
Banach space $\Lrho$ of functions $f\in L^\infty$ with finite norm
\be{Linftyrho}
   \norm{f}_{\Lrho} \,=\, \norm{\varrho f}_{(0,\infty)}\,,
\ee
where $\varrho$ is defined in \req{walpha}.
Since $\alpha>3$, the radially symmetric extension of any $f\in\Lrho$ 
to the full space $\R^3$ is absolutely integrable and has
a well-defined (three-dimensional) continuous Fourier transform.
This is important, because although $u$, $g$, and $h$ are defined as
functions of a positive argument $r>0$, 
they can be viewed as representations of radial
functions of a three-dimensional spatial variable in full space.
In particular, the Fourier transform of the corresponding extension of $h$
-- which is again radially symmetric and can therefore be represented by 
a function $\hhat:[0,\infty)\to\R$ by some slight abuse of the standard 
notation -- is used to define the \emph{structure factor}
\be{S}
   S(\omega) \,=\, 1 + \rho_0\hhat(\omega)\,, \qquad \omega\geq 0\,,
\ee
which is known to be continuous and nonnegative.


Going one step further, if $f_1,f_2\in\Lrho$, then the three-dimensional
convolution integral of their radially symmetric extensions to $\R^3$ 
is again a radial function
and -- as has also been shown in \cite{Hank18c} --
its representation (as a function defined in $\R^+$) again belongs to $\Lrho$; 
we adopt the notation $f_1*f_2$ for the resulting convolution product, 
which turns $\Lrho$ into a (commutative) Banach algebra.


\begin{proposition}
\label{Prop:hnc}
Let $u$ be a Lennard-Jones type pair potential \req{LJtype} with
parameter $\alpha>3$, and let the counting density $\rho_0$ of the ensemble 
be sufficiently small.
Using the pair correlation function $h$ of this ensemble and the above 
definition of the convolution product in $\Lrho$, define
\be{A}
   A:\Lrho\to\Lrho\,, \qquad A:f\mapsto \rho_0\,h*f\,.
\ee
Then $I+A$ is invertible in $\L(\Lrho)$, if the structure factor~\req{S} is 
strictly positive.
\end{proposition}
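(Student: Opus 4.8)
The plan is to identify $I+A$ with an element of the unitization of the convolution Banach algebra $\Lrho$ and to invoke a Wiener-type lemma -- the one proved in the appendix -- to determine its maximal ideal space. Since $\Lrho$ is commutative, its unitization $\Lrho^{+}=\C\oplus\Lrho$, with product $(\lambda,f)(\mu,g)=(\lambda\mu,\,\lambda g+\mu f+f*g)$, is a unital commutative Banach algebra, and $\pi\colon(\lambda,f)\mapsto\lambda I+A_f$, where $A_f g:=f*g$, is a bounded unital algebra homomorphism of $\Lrho^{+}$ into $\L(\Lrho)$ -- bounded because $\norm{f*g}_{\Lrho}\le\norm{f}_{\Lrho}\norm{g}_{\Lrho}$. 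Under $\pi$ the operator $I+A=I+A_{\rho_0 h}$ is the image of $e:=(1,\rho_0 h)\in\Lrho^{+}$, so it suffices to show that $e$ is invertible in $\Lrho^{+}$: writing $e^{-1}=(1,h')$ with $h'\in\Lrho$, we get $(I+A)(I+A_{h'})=\pi(e)\,\pi(e^{-1})=\pi(e\,e^{-1})=I$, and likewise on the other side, so $I+A$ is boundedly invertible on $\Lrho$.

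To decide invertibility of $e$ I would appeal to the Gelfand theory of $\Lrho^{+}$: $e$ is invertible in $\Lrho^{+}$ exactly when its Gelfand transform is nonvanishing on the maximal ideal space. The structural input -- furnished by the weighted Wiener lemma of the appendix -- is that the nonzero characters of $\Lrho$ are precisely the evaluations $f\mapsto\fhat(\omega)$, $\omega\ge0$, of the radial Fourier transform, so that the maximal ideal space of $\Lrho^{+}$ is the one-point compactification of $[0,\infty)$, with characters $(\lambda,f)\mapsto\lambda+\fhat(\omega)$ for $\omega\in[0,\infty)$ and $(\lambda,f)\mapsto\lambda$ at the added point $\infty$; here consistency at $\infty$ uses $\fhat(\omega)\to0$ as $\omega\to\infty$, which holds by Riemann--Lebesgue since $\Lrho\subset L^{1}(\R^{3})$ for $\alpha>3$. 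Evaluating the Gelfand transform of $e=(1,\rho_0 h)$ gives $1+\rho_0\hhat(\omega)=S(\omega)$ at each $\omega\ge0$ and $1$ at $\infty$; the value at $\infty$ is nonzero, and $S(\omega)>0$ for all $\omega\ge0$ is exactly the hypothesis on the structure factor. Hence $e$ is invertible in $\Lrho^{+}$, and the proposition follows.

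The main obstacle is the weighted Wiener lemma itself: unlike for the classical algebra $L^{1}(\R^{3})$, the space $\Lrho$ carries the weighted supremum norm \req{Linftyrho}, so the standard identification of characters with Fourier evaluations cannot simply be quoted. One has to verify that the polynomial weight $\varrho$ of \req{walpha} is admissible -- in particular that it satisfies a Beurling--Domar-type non-quasi-analyticity condition -- so that the character space of $\Lrho$ is still exhausted by the evaluations $f\mapsto\fhat(\omega)$, $\omega\ge0$, and no spurious characters arise from the weighting; this is precisely the content of the appendix. The remaining ingredients -- the Banach-algebra estimate for $A$, the realness, continuity and decay of $\hhat$, and the homomorphism bookkeeping with $\pi$ -- are routine, and with the lemma in hand the reduction above completes the proof.
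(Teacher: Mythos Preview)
Your reduction---embed $I+A$ as the element $(1,\rho_0 h)$ of the unitization $\Wrho$ and argue its invertibility there, then push back through the homomorphism $\pi$---is exactly the bridge the paper uses, and that part is fine. The divergence is in how invertibility of $(1,\rho_0 h)$ is obtained. You pass through Gelfand theory and assert that the appendix identifies the character space of $\Lrho$ with the Fourier evaluations $f\mapsto\fhat(\omega)$ via a Beurling--Domar type argument. It does not. Lemma~\ref{Lem:Wiener} never mentions characters or maximal ideals; it constructs the inverse directly: approximate $f=\rho_0 h$ in $L^1(\R^3)$ by a Schwartz function $u$, write down the explicit Schwartz inverse $d$ of $(1,u)$, form the small perturbation $w=(e-d)*(u-f)\in\Lrhorrr$ with $\norm{w}_{L^1(\R^3)}<1$, and sum the Neumann series of $w$ in $\Lrhorrr$ to obtain $c$ with $(1+\fhat)^{-1}=1-\chat$. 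That is the whole argument.

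Your final paragraph therefore misidentifies the obstacle. The paper never verifies a non-quasi-analyticity condition on $\varrho$, and for this weighted-$L^\infty$ convolution algebra---which is not one of the standard weighted-$L^1$ Beurling algebras---the character-space identification you rely on would be a separate piece of work you have not supplied. More to the point, given Lemma~\ref{Lem:Wiener} as it is actually stated, the Gelfand layer is superfluous: the lemma already hands you $c\in\Lrhorrr$ with $(1+\rho_0\hhat)(1-\chat)=1$, which is precisely the statement that $(1,-c)$ is the two-sided inverse of $(1,\rho_0 h)$ in $\Wrho$, and hence $(I+A)^{-1}=I-A_c$ with $A_c g=c*g$. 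So your outline reaches the right conclusion, but by attributing to the appendix a Gelfand-theoretic result it does not contain; the paper's route is constructive and shorter.
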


%


The proof of this result follows from a weighted version of the Wiener lemma,
stated and proved in the appendix, cf.~Lemma~\ref{Lem:Wiener}.

Under the assumptions of Proposition~\ref{Prop:hnc}
it follows in particular that the so-called \emph{Ornstein-Zernike relation}
\be{c}
   c \,+\, \rho_0 h*c \,=\, h
\ee
has a unique solution $c\in\Lrho$, known as the
\emph{direct correlation function}, cf.~\cite{HaMcD13}.
Then, with $\kB$ the Boltzmann constant and
\bdm
   \beta \,=\, \frac{1}{\kB T}
\edm
the inverse temperature, the \emph{hypernetted-chain approximation}
mentioned in the introduction states that
\be{HNC}
   g \,\approx\, e^{-\beta u + h-c}\,.
\ee
Historically \req{HNC} has been used to approximate $g$ without
lengthy molecular dynamics simulations, but by solving a (nonlinear)
integral equation instead. On the other hand, \req{HNC} can be solved for $u$
to provide an explicit approximation $\uHNC$ of the true pair potential, namely
\be{Y}
   \uHNC \,=\, U(g)
   \,=\, -\frac{1}{\beta} \,\log g \,+\, \frac{1}{\beta}\,(h-c)\,,
\ee
which only depends on quantities that are readily available
from the given radial distribution function.
    

We formally differentiate $U$ of \req{Y}
to determine the impact of small perturbations $g'$ of $g$ on $\uHNC$, namely
\be{Y-prime-tmp}
   U'(g)g' \,=\, -\frac{1}{\beta}\, \frac{g'}{g}
                 \,+\, \frac{1}{\beta}\,(g'-c')\,, 
\ee
where $c'$ is the derivative of $c$ with respect to $g$ or $h$, respectively:
Using \req{c} and the fact that $(\Lrho,*)$ is a Banach algebra,
we conclude that
\be{cprime-tmp}
   c' \,+\, \rho_0h*c' \,+\, \rho_0g'*c \,=\, g'\,.
\ee
Convolving this equation with $\rho_0 h$, adding the result to 
\req{cprime-tmp} again, and using the associativity and commutativity 
of the convolution product, we obtain
\bdm
   c' \,+\, 2\rho_0 h*c' \,+\, \rho_0^2\, h*h*c' \,+\, 
   \rho_0(c \,+\, \rho_0h*c)*g' \,=\, g'\,+\, \rho_0h*g'\,,
\edm
and inserting \req{c}, this yields
\bdm
   c' \,+\, 2\rho_0 h*c' \,+\, \rho_0^2\, h*h*c' \,=\, g'\,.
\edm   
With the operator $A$ of Proposition~\ref{Prop:hnc} the latter can be 
rewritten as
\bdm
   (I+A)^2 c' \,=\, g'\,,
\edm
showing that $c'\in\Lrho$ is well-defined when the structure factor is 
positive. Inserting this identity into \req{Y-prime-tmp}, we eventually obtain
\be{Y-prime}
   U'(g)g' \,=\, -\frac{1}{\beta}\, \frac{g'}{g}
                 \,+\, \frac{1}{\beta}\,\varphi\,, 
\ee
where
\be{varphi}
   \varphi \,=\, (I+A)^{-2}(2I + A) A\,g'\,.
\ee

\section{Generalized Newton schemes for the inverse Henderson problem}
\label{Sec:iH}
We now present iterative algorithms for an approximate solution of the
inverse Henderson problem, i.e., for determining a pair potential $\utilde$, 
for which the associated radial distribution function $G(\utilde)$ is close 
to the given data $g$ for specified values of $\rho_0$ and $\beta$.

One of the most successful methods of this kind is the 
\emph{Iterative Boltzmann Inversion} (IBI) 
\be{IBI}
   u_{k+1} \,=\, u_k \,+\, \frac{1}{\beta}\,\log\frac{g_k}{g}\,, \qquad
   k = 0,1,2,\dots\,,
\ee
originally suggested by Schommers~\cite{Scho73}. This method is widely used,
because it appears to be fairly robust. Soper, who redeveloped 
this scheme in \cite{Sope96}, gave some heuristic arguments to support this
observation. However, a rigorous convergence analysis is still lacking;
see \cite{Hank18c} for some preliminary results in this direction.

A certain shortcoming of IBI is that it may require quite a few iterations
to determine a sufficiently accurate potential.
In \cite{LyLa95} Lyubartsev and Laaksonen therefore proposed the 
Newton method
\be{IMC}
   u_{k+1} \,=\, u_k \,+\, G'(u_k)^{-1}(g-g_k)\,, \qquad g_k = G(u_k)\,,
\ee
$k=0,1,2,\dots$, as an alternative. 
In this scheme, now called \emph{Inverse Monte Carlo} (IMC),
the numerical evaluation of the Fr\'echet derivative of $G$
can be implemented by using higher order statistics of the 
ensemble corresponding to some integrated 3- and 4-particle distribution 
functions. As it requires longer forward simulations to achieve sufficiently 
accurate statistics of these higher order distribution functions, each IMC 
iteration is much more expensive than one step of IBI. 
Another shortcoming of IMC is the need to start the iteration with a 
fairly accurate initial guess. It is therefore sometimes recommended to first 
run a number of IBI steps before switching to IMC, cf., e.g., 
Mirzoev and Lyubartsev~\cite{MiLy13} or Murtola et al~\cite{MFKV07}.  

Here we propose a generalized Newton scheme,
where $G'(u_k)^{-1}$ in \req{IMC} is replaced by some approximation.
Note, for example, that the low-density approximation
\bdm
   G(u) \,\approx\, \FLDL(u) \,=\, e^{-\beta u}\,,
\edm
which is correct of order $O(\rho_0)$ as $\rho_0\to 0$, suggests to replace
\bdm
   G'(u_k)^{-1}g' \,\approx\, {\FLDLp(u)}^{-1}g'
   \,=\, -\frac{1}{\beta}\,e^{\beta u}g'
   \,\approx\, -\frac{1}{\beta}\,\frac{g'}{g}\,,
\edm
cf.~\cite{Ivan15}, and when using this approximation 
in \req{IMC} we arrive at the iterative scheme
\be{Ivanizki}
   u_{k+1} \,=\, u_k \,+\, \frac{1}{\beta}\,\frac{g_k-g}{g}\,, \qquad
   k = 0,1,2,\dots\,,
\ee
which is reminiscent of the IBI scheme \req{IBI}, because
\bdm
   \log\frac{g_k}{g} \,=\, \log\Bigl(1+\frac{g_k-g}{g}\Bigr)
   \,\approx\, \frac{g_k-g}{g}
\edm
for $g_k$ close to $g$. A more sophisticated
approximation of $G$, e.g., the one that is based on the hypernetted-chain 
approximation~\req{HNC}, 
which is correct of order $O(\rho_0^2)$ as $\rho_0\to 0$, cf.~\cite{HaMcD13},
may thus result in a powerful compromise between IBI and IMC.

To be specific, we propose to employ $U$ of \req{Y} and approximate 
\be{F-prime}
   G'(u_k)^{-1}g' \,\approx\, U'(g)g'
   \,=\, -\frac{1}{\beta} \frac{g'}{g}
         \,+\, \frac{1}{\beta}\,\varphi\,, 
\ee
cf.~\req{Y-prime},
where $g$ is the measured radial distribution function and $\varphi$ is
given by \req{varphi} with $A$ of \req{A}.
%
Inserting \req{F-prime} into \req{IMC} we thus obtain the iteration 
\begin{subequations}
\label{eq:HNCN}
\be{HNCN-rec}
   u_{k+1} \,=\, u_k \,+\, \frac{1}{\beta}\,\frac{g_k-g}{g}
   \,+\, \frac{1}{\beta}\,\varphi_k\,, \qquad k=0,1,2,\dots\,,
\ee
with
\be{varphi-k}
   \varphi_k \,=\, (I+A)^{-2}(2I+A)A\,(g-g_k)\,.
\ee
\end{subequations}
We call \req{HNCN} the \emph{hypernetted-chain Newton iteration} (HNCN).
Take note that this approach does \emph{not} involve a computation of
the hypernetted-chain approximation $\uHNC$ of \req{Y} itself;
the hypernetted-chain approximation is only used formally to determine
an approximate Newton inverse. 
Accordingly, when the iteration \req{HNCN} converges, i.e.,
when $u_k\to u$ and $g_k\to g$ as $k\to\infty$, then the limit $u$ is the
true solution of the Henderson problem for the given data.

Note that HNCN coincides with \req{Ivanizki} up to an additive correction 
term. The similarity between \req{Ivanizki} and IBI therefore
suggests to consider also the alternative IBI-type scheme
\be{IHNC}
   u_{k+1} \,=\, u_k \,+\, \frac{1}{\beta}\,\log\frac{g_k}{g}
   \,+\, \frac{1}{\beta}\,\varphi_k\,, \qquad k=0,1,2,\dots\,,
\ee
with $\varphi_k$ of \req{varphi-k},
which we call the \emph{inverse hypernetted-chain iteration} (IHNC).

We finally mention that IHNC and HNCN differ from the so-called
LWR scheme developed by Levesque, Weis, and Reatto~\cite{LWR85} and
rediscovered recently by Heinen~\cite{Hein18}: in our notation the LWR scheme
proceeds by computing
\bdm
   u_{k+1} \,=\, u_k \,+\, \frac{1}{\beta}\,\log\frac{g_k}{g}
   \,+\, \frac{1}{\beta}\bigl(g-g_k - c+c_k\bigr)\,, \qquad k=0,1,2,\dots\,,
\edm 
where $c$ is the direct correlation function~\req{c}, and $c_k$ is defined
accordingly via
\bdm
   c_k \,+\, \rho_0 h_k*c_k \,=\, h_k
\edm
with $h_k=g_k-1$. It is straightforward to verify that the LWR scheme 
can also be rewritten as
\bdm
   u_{k+1} \,=\, u_k \,+\, U(g) \,-\, U(g_k)
\edm
with $U$ of \req{Y}, hence the LWR update of the potential can be seen 
as the secant approximation of $U'(g)(g-g_k)$ used by the HNCN scheme.
While this may appear on first sight to be a minor difference only 
between the two schemes,
the tangent approximation turns out to be crucial to allow for 
subsequent extensions of the HNCN scheme described in 
Section~\ref{Sec:extensions}.

\section{Well-posedness of the IHNC and HNCN schemes}
\label{Sec:well-posedness}
We are now going to analyze the two new iterative schemes~\req{HNCN} and 
\req{IHNC} similar to the analysis of IBI in \cite{Hank18c}.
For this we work in the topology of the Banach space $\V$ defined in \req{Vd}.

\begin{proposition}
\label{Prop:IHNC}
Let $u$ be a Lennard-Jones type pair potential and $\rho_0$ be sufficiently
small. Moreover, assume that the structure factor~\req{S} is a strictly
positive function.
Then the IHNC iteration~\req{IHNC} is well-posed in the following sense:
If $\norm{u_0-u}_\V$ is sufficiently small, then $u_1$ is again a 
Lennard-Jones type pair potential, and there holds
\bdm
   \norm{u_1-u}_\V \,\leq\, C\norm{u_0-u}_\V
\edm
for some $C>0$, depending on $u$, $\rho_0$, and the inverse temperature $\beta$.
\end{proposition}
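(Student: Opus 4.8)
The plan is to show that the IHNC update map $u_0\mapsto u_1$ is well-defined on a small $\V$-neighborhood of $u$ and Lipschitz there, with the true potential $u$ as a fixed point. I would begin by writing the error $v_1=u_1-u$ in terms of $v_0=u_0-u$. Since $u$ solves the Henderson problem, $g=G(u)$, and IBI-type considerations give the identity $u=u+\frac1\beta\log(g/g)+\frac1\beta\varphi$ with $\varphi=0$ there; subtracting this trivial fixed-point relation from \req{IHNC} yields
\bdm
   v_1 \,=\, v_0 \,+\, \frac{1}{\beta}\,\log\frac{g_0}{g} \,+\, \frac{1}{\beta}\,\varphi_0\,,
\edm
where $g_0=G(u_0)$ and $\varphi_0=(I+A)^{-2}(2I+A)A\,(g-g_0)$. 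The first task is therefore to control $g_0-g$ in a suitable norm in terms of $\norm{v_0}_\V$. This is where I would invoke the differentiability of $G$ established in \cite{Hank18a}: for $\norm{v_0}_\V$ small one has $g_0-g=G(u_0)-G(u)=G'(u)v_0+o(\norm{v_0}_\V)$, and in particular $h_0=g_0-1\in\Lrho$ with $\norm{h_0-h}_{\Lrho}\to0$ as $\norm{v_0}_\V\to0$, and also $\norm{g_0-g}_{\V\text{-type estimate}}\lesssim\norm{v_0}_\V$ (more precisely, an estimate that measures $g_0-g$ both near the core, via $(g_0-g)/g$, and in the weighted sup norm on $[r_0,\infty)$).

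Next I would handle the correction term $\varphi_0$. Here the key structural ingredient is Proposition~\ref{Prop:hnc}: since the structure factor is strictly positive, $I+A$ is invertible in $\L(\Lrho)$, hence so is $(I+A)^2$, and $(I+A)^{-2}(2I+A)A$ is a bounded operator on $\Lrho$. Therefore $\norm{\varphi_0}_{\Lrho}\leq C_1\norm{g-g_0}_{\Lrho}\leq C_2\norm{v_0}_\V$. One subtlety is that $A$ itself depends on $h$, which is fixed (it is built from the \emph{target} data $g$), so there is no need to perturb the operator $A$ — a point that simplifies the analysis considerably compared with the LWR-type schemes. I would then estimate the logarithmic term by a Taylor expansion $\log(g_0/g)=\log(1+(g_0-g)/g)=(g_0-g)/g+O(((g_0-g)/g)^2)$, valid since $g$ is bounded below on compacts and $g_0-g$ is small; combining the two contributions gives $\norm{v_1}_\V\leq C\norm{v_0}_\V$ with $C$ depending only on $u,\rho_0,\beta$ through the operator norm of $(I+A)^{-2}(2I+A)A$, the lower bound on $g$, and the Lipschitz constant of $G$ near $u$.

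Finally, I would verify that $u_1$ is again of Lennard-Jones type, i.e., satisfies \req{LJtype} possibly with slightly adjusted constants $a,b$. For $r\geq r_0$ this follows from the weighted sup-norm bound on $v_1=u_1-u$ together with $|u(r)|\leq b r^{-\alpha}$, since $\varrho(r)\sim r^\alpha$; for $r\leq r_0$ one uses that $\norm{v_1/u}_{(0,r_0]}$ is small, so $u_1=u(1+v_1/u)\geq \frac12 u\geq\frac a2 r^{-\alpha}$ on the core. I expect the main obstacle to be the careful bookkeeping of norms: the space $\V$ mixes a relative norm near the core with a weighted absolute norm away from it, whereas $\Lrho$ is a pure weighted $L^\infty$ space, so one must check that multiplication by $1/g$ and the convolution operators map between these scales consistently — in particular that the core behavior of $g_0-g$ (where it is $O(r^{-\alpha})$ small relative to $g$, which itself is exponentially small) is not destroyed by the smoothing operator $(I+A)^{-2}(2I+A)A$. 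This is precisely the kind of estimate the Banach-algebra structure of $(\Lrho,*)$ and the weighted Wiener lemma of the appendix are designed to deliver, so I would lean on those throughout rather than re-deriving pointwise bounds.
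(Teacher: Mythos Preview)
Your proposal is correct and follows essentially the same route as the paper: bound the logarithmic term in $\V$, bound $\varphi_0$ by passing through $\Lrho$ via Proposition~\ref{Prop:hnc} and the embedding $\Lrho\hookrightarrow\V$, and combine. The paper is terser only because it cites \cite[(6.3)]{Hank18c} directly for $\norm{\log(g_0/g)}_\V\leq C\norm{u_0-u}_\V$ and \cite[Theorem~5.3]{Hank18c} for $\norm{g_0-g}_\Lrho\leq C\norm{u_0-u}_\V$, rather than sketching a Taylor argument; note in this connection that your justification ``$g$ is bounded below on compacts'' fails in the core region $(0,r_0]$, so there the log estimate really does need the cavity-function decomposition $g=ye^{-\beta u}$ you allude to at the end.
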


\begin{proof}
In the analysis of IBI in \cite{Hank18c} it has been shown that
\be{ineq1}
   \bigl\|\log(g_0/g)\bigr\|_\V \,\leq\, C \norm{u_0-u}_\V
\ee
for some constant $C>0$, cf.~\cite[(6.3)]{Hank18c}. Furthermore,
since $\Lrho$ is continuously embedded in $\V$ because of \req{LJtype}, and
since $A$ and $(I+A)^{-1}$ belong to $\L(\Lrho)$ by virtue of 
Proposition~\ref{Prop:hnc}, it follows from \req{varphi-k} that
\bdm
   \norm{\varphi_0}_\V \,\leq\, C \norm{\varphi_0}_\Lrho
   \,\leq\, C \norm{g_0-g}_\Lrho \,\leq\, C \norm{u_0-u}_\V
\edm
for some (other) constants $C>0$ that may be different in each of the 
individual terms;
here, the last inequality is borrowed from \cite[Theorem~5.3]{Hank18c}. 
Together with \req{IHNC} and \req{ineq1} we thus obtain the assertion.
\end{proof}

Concerning HNCN we have a similar result which is stated next, but this one
requires $u_0$ to be close to $u$ in the stronger norm of $\Lrho$.

\begin{theorem}
\label{Thm:HNCN}
Under the assumptions of Proposition~\ref{Prop:IHNC}
the HNCN iteration~\req{HNCN} is conditionally well-posed 
in the following sense: If $\norm{u_0-u}_\Lrho$ is sufficiently small,
then $u_1$ is again a Lennard-Jones type pair potential, and there holds
\bdm
   \norm{u_1-u}_\Lrho \,\leq\, C\norm{u_0-u}_\Lrho
\edm
for some $C>0$, depending on $u$, $\rho_0$, and the inverse temperature $\beta$.
\end{theorem}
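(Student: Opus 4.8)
The plan is to mimic the structure of the IHNC proof in Proposition~\ref{Prop:IHNC}, but to replace the estimate~\req{ineq1} for $\log(g_0/g)$ by a corresponding estimate for the linearized term $(g_0-g)/g$ in the stronger norm of $\Lrho$. Writing the HNCN update~\req{HNCN-rec} as
\bdm
   u_1 - u \,=\, (u_0-u) \,+\, \frac{1}{\beta}\,\frac{g_0-g}{g}
   \,+\, \frac{1}{\beta}\,\varphi_0\,,
\edm
I would first argue that, since $g$ is bounded away from $0$ on $(0,\infty)$ and $\varrho(g_0-g)$ is controlled, the pointwise quotient $(g_0-g)/g$ lies in $\Lrho$ with $\norm{(g_0-g)/g}_\Lrho \le C\norm{g_0-g}_\Lrho$; combined with \cite[Theorem~5.3]{Hank18c} this gives $\norm{(g_0-g)/g}_\Lrho \le C\norm{u_0-u}_\V \le C\norm{u_0-u}_\Lrho$, using the continuous embedding $\Lrho\hookrightarrow\V$ from \req{LJtype}. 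The term $\varphi_0 = (I+A)^{-2}(2I+A)A\,(g-g_0)$ is handled exactly as in Proposition~\ref{Prop:IHNC}: $A$ and $(I+A)^{-1}$ are bounded on $\Lrho$ by Proposition~\ref{Prop:hnc}, so $\norm{\varphi_0}_\Lrho \le C\norm{g_0-g}_\Lrho \le C\norm{u_0-u}_\Lrho$. Hence $u_1-u\in\Lrho$ with $\norm{u_1-u}_\Lrho \le C\norm{u_0-u}_\Lrho$ for a suitable $C$.

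The remaining point is to show that $u_1$ is again of Lennard-Jones type~\req{LJtype}. Here I would split into the near-field and far-field regimes. On $[r_0,\infty)$ the decay estimate $|u_1(r)|\le b' r^{-\alpha}$ follows directly from $u_1-u\in\Lrho$ together with $|u(r)|\le b r^{-\alpha}$, since membership in $\Lrho$ encodes exactly the weighted decay $\varrho\,(u_1-u)\in L^\infty$ and $\varrho(r)\sim r^\alpha$ for large $r$. On the core $(0,r_0]$ I would use the lower bound $u(r)\ge a r^{-\alpha}$ from~\req{LJtype} together with the bound on $(u_1-u)/u$ coming from $\norm{u_1-u}_\Lrho$: if $\norm{u_0-u}_\Lrho$, and hence $\norm{u_1-u}_\Lrho$, is small enough, then $|u_1(r)-u(r)|\le \tfrac12 u(r)$ on $(0,r_0]$, so $u_1(r)\ge\tfrac12 u(r)\ge \tfrac12 a r^{-\alpha}$, which is the required core bound with $a'=a/2$. (Strictly, one should check that the $\Lrho$-smallness of $u_1-u$ controls $(u_1-u)/u$ near $r=0$; since $u(r)\ge a r^{-\alpha}\ge a\varrho(r)^{-1}$ on $(0,r_0]$ up to a constant, one has $|(u_1-u)/u|\le C\varrho|u_1-u|\le C\norm{u_1-u}_\Lrho$ there, so this is immediate.)

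Finally, one has to make sure that the various quantities entering the update — the forward value $g_0=G(u_0)$, the pair correlation function $h_0$, the operator $A$ built from $h$, and the inverse $(I+A)^{-2}$ — are all well-defined for $u_0$ in the prescribed neighborhood. This is where the hypotheses carried over from Proposition~\ref{Prop:IHNC} do the work: the differentiability and mapping properties of $G$ near $u$ from~\cite{Hank18a}, the fact that $h\in\Lrho$ from~\cite{Hank18c}, and the invertibility of $I+A$ from Proposition~\ref{Prop:hnc} under the strict positivity of the structure factor. I expect the main obstacle to be the bookkeeping at $r=0$: one must be slightly careful that the approximate-Newton correction $\tfrac1\beta(g_0-g)/g + \tfrac1\beta\varphi_0$, which a priori is only controlled in $\Lrho$, does not destroy the \emph{lower} bound defining the Lennard-Jones core, and this is precisely why the theorem needs the stronger $\Lrho$-closeness of $u_0$ rather than mere $\V$-closeness as in the IHNC case. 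Collecting the three estimates above yields the claimed inequality and the Lennard-Jones property of $u_1$.
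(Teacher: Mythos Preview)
Your handling of $\varphi_0$ and of the Lennard-Jones property of $u_1$ is fine, and your overall decomposition matches the paper's. The genuine gap is in your estimate of $(g_0-g)/g$: you assert that ``$g$ is bounded away from $0$ on $(0,\infty)$'', but this is false for a Lennard-Jones type potential. On the core $(0,r_0]$ one has $g(r)=y(r)e^{-\beta u(r)}$ with $u(r)\geq a r^{-\alpha}\to\infty$ as $r\to 0$, so $g(r)\to 0$. Hence the step $\norm{(g_0-g)/g}_\Lrho\leq C\norm{g_0-g}_\Lrho$ does not follow, and the entire argument for the second term in the update collapses precisely in the region where the stronger $\Lrho$-hypothesis is supposed to do work.

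The paper fixes this by passing to the cavity distribution function $y$, which \emph{is} bounded away from zero for small $\rho_0$. On $(0,r_0]$ one writes
\bdm
   \frac{g_0-g}{g}\,=\,\frac{e^{\beta u}(g_0-g)}{y}
   \,=\,\frac{(y_0-y)\,+\,g_0\bigl(e^{\beta u}-e^{\beta u_0}\bigr)}{y}\,,
\edm
and then controls the two numerator terms separately: $|y_0-y|\leq C\norm{u_0-u}_\Lrho$ by the Lipschitz dependence of the cavity function on the potential, and $g_0|e^{\beta u}-e^{\beta u_0}|\leq \beta y_0\,e^{\beta\norm{u_0-u}_\Lrho}\,|u-u_0|$ via the mean value theorem together with $g_0e^{\beta u_0}=y_0$. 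Since $\varrho$ is bounded on $(0,r_0]$, this yields the required $\Lrho$-bound on the core; for $r\geq r_0$ your argument (that $g$ is bounded below there) is the correct one and is exactly what the paper uses. This cavity-function decomposition is the missing idea in your proposal.
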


\begin{proof}
According to \req{HNCN} there holds
\bdm
   u_1 - u \,=\, u_0-u \,+\, \frac{1}{\beta}\,\frac{g_0-g}{g} \,+\,
                 \frac{1}{\beta}\,\varphi_0\,,
\edm
where 
\bdm
   \norm{\varphi_0}_\Lrho \,\leq\, C\norm{u_0-u}_\Lrho
\edm
for some constant $C>0$ by virtue of \req{ineq1}, because $\Lrho$ is
continuously embedded in $\V$. It therefore remains to prove that
\be{goalproofPY-N}
   \Bigl\|\frac{g_0-g}{g}\Bigr\|_\Lrho \,\leq\, C\norm{u_0-u}_\Lrho
\ee
for some (other) suitable $C>0$. 

Consider first a fixed radius $r\geq r_0$. We rewrite
\bdm
   g(r) \,=\, y(r) e^{-\beta u(r)} 
\edm
in terms of the \emph{cavity distribution function} $y$, 
compare \cite{HaMcD13},
which is known to be bounded away from zero for small enough density $\rho_0$
according to Proposition~3.1 in \cite{Hank18c}. It follows that $g$ is
bounded away from zero for $r\geq r_0$, and hence,
there exist positive constants $C>0$ such that
\be{goalproofPY-N-1}
\begin{aligned}
   \varrho(r)\, \Bigl|\frac{g_0(r)-g(r)}{g(r)}\Bigr|
   &\,\leq\, C \varrho(r) \,\bigl|g_0(r)-g(r)\bigr|
    \,\leq\, C \norm{g_0-g}_\Lrho\\[1ex]
   &\,\leq\, C \norm{u_0-u}_\Lrho\,, \qquad r\geq r_0\,;
\end{aligned}
\ee
compare~\req{ineq1} again for the final estimate.

For a fixed radius $r$ with $0<r\leq r_0$, on the other hand, we
use the cavity distribution functions $y_0$ and $y$ corresponding to $u_0$
and $u$, respectively, and rewrite
\bdm
   \frac{g_0(r)-g(r)}{g(r)}
   \,=\, \frac{e^{\beta u(r)}\bigl(g_0(r)-g(r)\bigr)}{y(r)}\,.
\edm
Since $y$ is bounded away from zero we deduce from the mean value theorem that
\bdmal
   \varrho(r)\, \Bigl|\frac{g_0(r)-g(r)}{g(r)}\Bigr|
   &\,\leq\, C \,e^{\beta u(r)}\,\bigl|g_0(r)-g(r)\bigr|\\[1ex]
   &\,\leq\, C \Bigl( \bigl|y_0(r)-y(r)\bigr|
                        \,+\, g_0(r)\,\bigl|e^{\beta u(r)} - e^{\beta u_0(r)}\bigr|
                 \Bigr) \\[1ex]
   &\,=\, C \Bigl( \bigl|y_0(r)-y(r)\bigr|
                     \,+\, \beta g_0(r)e^{\beta \utilde}\,\bigl|u(r)-u_0(r)\bigr|
                 \Bigr)
\edmal
for some $C>0$ independent of $r$ and some $\utilde$ between $u_0(r)$ and 
$u(r)$. Note that the latter implies that
\bdm
   \tilde u \,\leq\, u_0(r) \,+\, \bigl|u_0(r)-u(r)\bigr|
   \,\leq\, u_0(r) \,+\, \norm{u_0-u}_\Lrho\,.
\edm 
Since the cavity distribution function in $L^\infty(\R^+)$ 
depends locally Lipschitz continuously on the pair potential in $\Lrho$ 
(see Proposition~3.1 in \cite{Hank18c}) it follows that
\bdmal
   \varrho(r)\, \Bigl|\frac{g_0(r)-g(r)}{g(r)}\Bigr|
   &\,\leq\, C \,\norm{u_0-u}_\Lrho 
             \Bigl(1+\beta g_0(r)e^{\beta \utilde}\Bigr)\\[1ex]
   &\,\leq\, C \,\norm{u_0-u}_\Lrho 
             \Bigl(1+\beta y_0(r)e^{\beta \norm{u_0-u}_\Lrho}\Bigr)\\[2ex]
   &\,\leq\, C \,\norm{u_0-u}_\Lrho\,, \qquad 0<r\leq r_0\,,
\edmal
for some suitable constants $C>0$, provided that $\norm{u_0-u}_\Lrho$ is 
sufficiently small. This being independent of $r\in(0,r_0]$,
we have thus achieved to establish
\req{goalproofPY-N-1} also for $0<r\leq r_0$, and hence the proof 
of \req{goalproofPY-N} is done.
\end{proof}

Theorem~\ref{Thm:HNCN} indicates that the HNCN iteration requires a better 
initial approximation of the true potential within the core region 
$0 < r \leq r_0$ than IHNC. Nevertheless, as shown
in \cite{Hank18c}, if the data $g$ are exact, then the potential of mean force,
\be{PMF}
   u_0 \,=\, -\frac{1}{\beta} \log g\,,
\ee
which is often taken as initial guess in practice,
does satisfy $u_0-u\in\Lrho$, which means that the assumptions of
Theorem~\ref{Thm:HNCN} are not too far-fetched.

\section{Numerical discretization}
\label{Sec:discretization}
Compared with IBI the only additional difficulty in a numerical 
implementation of HNCN and IHNC consists in computing $\varphi_k$ 
of \req{varphi-k}. To simplify notation let us denote by
\be{T}
   T \,=\, (I+A)^{-2}(2I+A)A
\ee
the operator occuring in \req{varphi-k}. Recall that $A$ corresponds to a 
three-dimensional convolution integral with $\rho_0$ times
the radially symmetric extension
of the pair correlation function $h=g-1$ as convolution kernel, cf.~\req{A}.
The natural framework for discretizing $A$ and $T$ is therefore the 
Fourier space, using the representation
\begin{subequations}
\label{eq:ahat}
\be{ahat-fwd}
   \fhat(\omega) 
   \,=\, \frac{2}{\omega} \int_0^\infty r\,f(r)\sin(2\pi r\omega)\dr
\ee
for the three-dimensional Fourier transform of the radially symmetric
extension of $f\in\Lrho$,
where $\omega>0$ is the absolute value of the three-dimensional frequency.
Likewise, we can compute $f$ from $\fhat$ by using the formula
\be{ahat-inv}
   f(r) \,=\, \frac{2}{r} 
              \int_0^\infty \omega\,\fhat(\omega) \sin(2\pi r\omega)\domega\,.
\ee
\end{subequations}
To implement $\varphi=Tf$ for $f\in\Lrho$ we therefore need to determine 
$\fhat$ and the corresponding representation $\hhat$ for $h$, form
\be{varphi-n-prime-2}
   \phihat \,=\, 
   \frac{2+\rho_0\hhat\phantom{{}^2}}{\bigl(1+\rho_0\hhat\bigr)^2}\,
   \rho_0\hhat\fhat\,,
\ee
and transform back using \req{ahat-inv} to obtain $\varphi$.

In order to achieve reasonable accuracy of the low frequencies of the
Fourier transform of $h$, the simulation box
and the particle count need to be sufficiently large.
Generally this implies that the radial distribution function is being
sampled on a larger radial interval than is used for tabulating the 
pair potential. To be specific, we will assume that the 
radial distribution function $g$ is given on a grid
\be{grid}
   \Delta \,=\, \{r_j=j\Deltar r\,:\, j=1,\dots,m\}
\ee
with $m$ grid points and spacing $\Deltar r>0$, and that $h(r)$ is negligible
for $r>r_m$.
On the other hand, the potentials $u_k$ are being tabulated on the subgrid
\be{gridprime}
   \Delta' \,=\, \{r_i=i\Deltar r\,:\, i=1,\dots,n\} \,\subset\,\Delta
\ee
with $n\leq m$ grid points and the understanding that $u_k(r)=0$ for
$r\geq r_n$.

For a generic function $f\in\Lrho$ 
which is vanishing for $r>r_m$ and which has been sampled on $\Delta$
the integral~\req{ahat-fwd} can be discretized with the 
trapezoidal quadrature rule.
Introducing the odd extension
\bdm
   \psi(r) \,=\, \begin{cases}
                    rf(r)\,, & r\geq 0\,, \\[1ex]
                    rf(-r)\,, & r<0\,,
                 \end{cases}
\edm
of $r\mapsto rf(r)$ to the whole real line (and to the extended grid with
nonpositive grid points $r_j$ with $j\leq 0$), and taking into account that
$\psi(r_j)=0$ for $|j|>m$, 
the quadrature approximation of \req{ahat-fwd} can be written as
\be{Shannon}
   \fhat(\omega) \,\approx\, \frac{1}{\rmi\omega}\,
   \Bigl(\Deltar r \sum_{j=-m}^{m+1} \psi(r_j) e^{-2\pi\rmi \omega r_j}\Bigr).
\ee
This approximation is in good agreement with
the true values of the Fourier transform of $f$ as long as
\bdm
   0 \,\leq\, \omega \,\leq\, \omega_* \,:=\, \frac{1}{2\Deltar r}\,,
\edm
provided that $f$ is negligible for $r>r_m$ and $\fhat$ is negligible 
for $\omega>\omega_*$, cf, e.g., Henrici~\cite[\S~13.3]{Henr86}.
Note that if the term in brackets in \req{Shannon} is to be evaluated 
at the $2(m+1)$ frequencies
\bdm
   \omega_l \,=\, \frac{l}{m+1}\,\omega_*\,, \qquad l=-m\,\dots,m+1\,,
\edm
then this can be implemented efficiently with a one-dimensional 
fast Fourier transform ({\sc fft}) of length $2(m+1)$, simultaneously for all 
these frequencies.


Alternatively, a matrix representation $\T\in\R^{m\times m}$ of the operator
$T$ of \req{T} can be assembled as
\be{TT}
   \T \,=\, \F^{-1}\H\F\,,
\ee
where $\F$ corresponds to the Fourier matrix which takes $[f(r_j)]_{j=1}^m$
onto $[\fhat(\omega_l)]_{l=1}^m$ given by \req{Shannon}, 
and $\H\in\R^{m\times m}$ is a diagonal matrix with the entries
\bdm
   h_{ll} \,=\,
   \frac{2+\rho_0\hhat(\omega_l)\phantom{{}^2}}
        {\bigl(1+\rho_0\hhat(\omega_l)\bigr)^2}
   \,\rho_0\hhat(\omega_l)\,, \qquad l=1,\dots,m\,,
\edm
on its diagonal; compare \req{varphi-n-prime-2}.
Note that the multiplication of $\T$ with the vector $\gg-\gg_k$
of samples of $g-g_k$ results in an $m$-dimensional vector with the values
of $\varphi_k$ of \req{varphi-k} over $\Delta$. If $\Delta'$ is a true
subset of $\Delta$, then we simply cut off the redundant entries when updating
the pair potential $u_k$, as it is done in IBI. 

\begin{remark}
\label{Rem:VOTCA}
\rm
We mention that common software like 
{\sc votca}\footnote{{\tt http://www.votca.org}}~\cite{RJLKA09} 
for running IBI typically comes with additional tricks for
pre- and postprocessing the relevant quantities, which
are not explicit in the recursion \req{IBI}. The same applies to
the new schemes HNCN and IHNC; more precisely the following items have
been addressed in our implementation of \req{HNCN} and \req{IHNC}:
\renewcommand{\labelenumi}{(\roman{enumi})}
\begin{enumerate}
\item
The simulated radial distribution functions
will be numerically zero in the core region $0<r\leq r_0$, in which case 
IBI as well as the new iterative schemes \req{HNCN} and \req{IHNC}
fail to produce a well-defined potential update for these radii; instead,
the potential $u_{k+1}$ needs to be extrapolated into the 
core region\footnote{The core region $r_0$ is chosen in each step
as the smallest grid point of \req{grid}, such that $g$ and $g_k$ are nonzero 
for every $r_j>r_0$.}
by some ad hoc scheme.
In our implementation we fit and extrapolate the computed values of $u_{k+1}$ 
in the core region to a function of the form $a'r^{-\alpha'}$ with 
appropriate positive parameters $a'$ and $\alpha'$.
\item
After each iteration the new potential $u_{k+1}$ is shifted by an
additive constant to satisfy $u_{k+1}(r_n)=0$, so that the extension of $u_{k+1}$ 
by zero for arguments $r>r_n$ is continuous.
\item
We have used 
{\sc gromacs}\footnote{{\tt http://www.gromacs.org}}~\cite{HKSL08} 
for the numerical computation of $g_k=G(u_k)$, with interpolated 
input values of $u_k$ on a grid which is ten times finer than $\Delta'$.
\end{enumerate}
We finally emphasize that our implementation of HNCN and IHNC uses
no postprocessing (e.g., smoothing) of the computed radial distribution 
functions, nor of the approximate potentials.
\fin
\end{remark}

\section{Extensions of the method}
\label{Sec:extensions}
Due to the many simplifying modeling assumptions, and also due to inevitable
noise in the given data, the inverse Henderson problem may not have a 
solution, and even when, it may not be appropriate to determine a 
pair potential $u$ which satisfies $G(u)=g$ exactly.
Rather, one should think of the problem as of an optimization problem
\bdm
   \text{minimize} \quad \norm{g-G(u)}
\edm
in some suitable norm, where the goal is to find an approximate minimizer
only. In the context of our generalized Newton approach
the obvious way of treating this minimization problem numerically is via
a Gauss-Newton type scheme, where each iteration consists of solving
the linearized minimization problem
\be{Gauss-Newton}
   \text{minimize} \quad \norm{g-g_k-G'(u_k)v}
\ee
before updating $u_{k+1} = u_k+v$; 
compare, e.g., Lyubartsev et al~\cite{LMCL10} or Murtola et al~\cite{MFKV07}. 
In view of \req{F-prime} we again propose
to replace $G'(u_k)$ by $U'(g)^{-1}$. With the same discretization as in
Section~\ref{Sec:discretization} this leads to the minimization problem
\be{Gauss-Newton-discrete}
   \text{minimize} \quad \norm{\W(\gg-\gg_k-\U^{-1}\vv)}_2
\ee
over $\vv\in\R^m$,
where $\norm{\,\cdot\,}_2$ denotes the standard Euclidean norm in $\R^m$,
$\W\in\R^{m\times m}$ is an appropriate nonnegative diagonal weighting matrix,
and
\bdm
   \U \,=\, -\frac{1}{\beta}\,\D^{-1} \,+\, \frac{1}{\beta}\,\T
\edm
is the discretized approximation of $U'(g)$, cf.~\req{F-prime};
here, $\D$ is the diagonal matrix with the samples of the given radial 
distribution function on its diagonal and $\T$ is defined in \req{TT}.

In view of Remark~\ref{Rem:VOTCA} this is not quite correct, though. 
As the samples
of the radial distribution function in the core region are numerically
zero, matrix $\D$ will fail to be invertible; but since the potential is
extended by extrapolation into the core region, anyway, we neither need to
keep track of the corresponding samples of $g$ nor of the respective 
function values of $u_k$. So, by some abuse of notation, we assume
in the sequel that the grid $\Delta$ only consists of the grid 
points $r_j$ in the exterior of the core region; we still
denote the number of grid points in $\Delta$ by $m$.

As has been mentioned in the previous section, $\Delta$
will typically have more grid points than $\Delta'$, and similar to above
we assume below that $\Delta'$ consists of the first $n<m$ grid points
$r_j$ of $\Delta$ outside the core region. If $\Delta'\subsetneq\Delta$,
then we only admit vectors $\vv\in\R^m$ for updating the pair potential 
which have zero entries for grid points $r_j\in\Delta\setminus\Delta'$.
Moreover, for several reasons we prefer to restrict admissible vectors
$\vv$ for \req{Gauss-Newton-discrete} somewhat further by substituting
\bdm
   \vv \,=\, A_0\ww 
\edm
with $\ww\in\R^{n-1}$ and
\be{P}
   \A_0 \,=\, \begin{cmatrix}
                 \A \\[1ex] \O
              \end{cmatrix},
   \qquad \text{where} \qquad
   \A \,=\, \Deltar r
            \begin{cmatrix}
                1 & 1 & \cdots & 1\\
                0 & 1 & \cdots & 1\\[-1ex]
                \vdots & 0 & \ddots & \vdots\\[-1.5ex]
                \vdots & \vdots & \ddots & 1\\
                0 & 0 & \cdots & 0
            \end{cmatrix}
   \in\R^{n\times (n-1)}
\ee
stands for a discrete (negative) antiderivative operator 
and $\O$ is an $(m-n)\times n$ zero block; 
accordingly, $\vv$ corresponds to a piecewise linear function
$v$ over $\Delta$ which is vanishing on $\Delta\setminus\Delta'$
and whose piecewise constant derivative on the grid intervals
of $\Delta'$ is given by the entries of $-\ww$.

We thus
determine the vector $\uu_{k+1}$ with the values of $u_{k+1}$ over $\Delta'$ 
by considering the weighted linear least squares problem
\begin{subequations}
\label{eq:HNCGN}
\be{HNCGN-a}
   \text{minimize} \quad \bigl\|\W(\gg - \gg_k - \U^{-1}\A_0\ww_k)\bigr\|_2 \,,
\ee
to be solved for $\ww_k\in\R^{n-1}$, and then update
\be{HNCGN-b}
   \uu_{k+1} \,=\, \uu_k \,+\, \A\ww_k\,.
\ee
This we call the \emph{hypernetted-chain Gauss-Newton iteration} (HNCGN).

One advantage of minimizing~\req{HNCGN-a} over $\ww=\ww_k$ rather than $\vv$ 
as in \req{Gauss-Newton-discrete} is that this adds some correlations to 
neighboring function values of the pair potentials; another advantage is that
we automatically respect the normalization condition $u_{k+1}(r_n)=0$, 
and therefore we avoid the extra shifting step mentioned in 
Remark~\ref{Rem:VOTCA}\,(ii).

With HNCGN it is easy to impose additional constraints on
$u_{k+1}$. As a simple example we treat the case that a certain
value $p$ for the pressure of the system is being imposed,
because this particular constraint has often been addressed in the literature 
as a possibility for improving the thermodynamical properties of 
coarse-grained models resulting from IBI or IMC iterations, 
cf., e.g., \cite{FKSL12,JGK06,MFKV07,PPK14,RPM03,WJK09}.
In the thermodynamical limit the pressure of the system 
is given by the virial integral
\bdm
   p \,=\, \frac{\rho_0}{\beta}
            \,-\, \frac{2}{3}\pi\rho_0^2\int_0^\infty u'(r)g(r)r^3\dr\,,
\edm
provided that the pair potential is differentiable and that its derivative
decays sufficiently rapidly near infinity; compare~\cite{HaMcD13}.
One way to enforce (approximately) the same pressure for the ensemble 
corresponding to the pair potential $u_{k+1}$ -- assuming that the simulated
radial distribution function $g_{k+1}$ is sufficiently close to the true one --
is by constraining $u_{k+1}$ to satisfy
\bdm
   \frac{2}{3}\pi\rho_0^2
   \int_0^\infty \Bigl(u_k'(r)-u_{k+1}'(r)\Bigr)g(r)r^3\dr
   \,\approx\, p-p_k \,,
\edm
where $p_k$ is the pressure corresponding to $u_k$; the latter can either
be evaluated within the simulation run for evaluating $G(u_k)$
or by numerical quadrature of the corresponding virial integral.
Since the entries $w_{i,k}$ of $\ww_{k}$ approximate the values
of $u_k'-u_{k+1}'$ over the interval $(r_i,r_{i+1})$,
the left-hand side of the previous equation can be discretized as
\bdm
   \frac{2}{3}\pi\rho_0^2 
   \sum_{i=1}^{n-1}w_{i,k}\,\frac{g(r_i)+g(r_{i+1})}{2}\frac{r_{i+1}^4-r_i^4}{4}
   \,=:\, \ll^T\ww_k
\edm
for a corresponding vector $\ll\in\R^{n-1}$, 
and this leads to a discrete constraint of the form
\be{HNCGN-constraint}
   \ll^T\ww_k \,=\, p-p_k
\ee
\end{subequations}
for all $\ww_k\in\R^{n-1}$, over which \req{HNCGN-a} is to be minimized. 

The standard recommendation for dealing with the constrained
minimization problem \req{HNCGN-a}, 
\req{HNCGN-constraint} numerically is to solve \req{HNCGN-constraint} 
for one of the entries in $\ww_k$, $w_{i_0,k}$ say,
and to use the resulting expression to eliminate this variable from 
\req{HNCGN-a}; cf., e.g., Bj\"orck~\cite{Bjor96}. To achieve maximal
stability $i_0$ should be the very index for which the corresponding element 
$\ell_{i_0}$ of $\ll\in\R^{n-1}$ has maximal modulus. Once $w_{i_0,k}$ has
been eliminated, \req{HNCGN-a} becomes an unconstrained minimization 
problem over the remaining entries of $\ww_k$, the solution of which is given
by the corresponding normal equation system, cf.~\cite{Bjor96}.
The final algorithm is slightly more expensive than IHNC, but the extra cost
is negligible compared to the overall costs of an individual iteration of
either of the schemes.

It remains to discuss the choice of the weighting matrix $\W$ in 
\req{Gauss-Newton}. A natural candidate is $\W=\I$, the $m\times m$
identity matrix. Alternatively, since it is known that $g-g_k=h-h_k\in\Lrho$
for some exponent $\alpha>3$, one could also think of using
$\W$ to enforce that the discrete approximation of $g-g_k$
shows a similar qualitative behavior for larger radii. In this case the
diagonal entries $w_{jj}$ of $\W$ should increase with increasing index,
e.g., 
\be{w-power}
   w_{jj} \,=\, (1+r_j^2)^\gamma\,, \qquad 1\leq j \leq m\,,
\ee
for some exponent $\gamma>0$.
However, we found that the choice \req{w-power} for $\gamma>0$ lent too much
flexibility to the values of $u_k(r)$ for radii $r$ near the core region, 
so that the computed potentials became worse eventually. 
In our numerical results in Subsection~\ref{Subsec:p-HNCGN} 
we therefore use $\W=\I$ throughout.


\section{Numerical results}
\label{Sec:Numerics}
We now present some numerical results to illustrate the performance of the
new methods as compared to IBI and IMC. For this we concentrate on the results
of IHNC; in all our tests we did not see significant differences
between IHNC and HNCN, but the theoretical results of 
Section~\ref{Sec:well-posedness} indicate that IHNC may be slightly more 
robust. 

Our benchmark problems include simulated data for a truncated and 
shifted Lennard-Jones potential as well as measured data for liquid argon
taken from the literature. 
We mention that for the latter problem, in particular, our mathematical 
assumption that the system be in its gas phase, is violated. As it turns out
this does not affect the applicability of our algorithms.
 
In all our 
numerical examples we use {\sc gromacs}, version 2016.3, to implement the 
forward operator $G$: to be specific, we simulate a canonical ensemble with 
$N=2000$ particles and determine the 
corresponding radial distribution function from the final 3500 frames.
For IMC the same frames are also used to set up the sensitivity matrix 
corresponding to $G'$. Remark~\ref{Rem:VOTCA} applies to our implementations 
of IBI and IMC in the same way.

\subsection{Truncated and shifted Lennard-Jones fluids near phase transitions}
\label{Subsec:Numerics-1}
Let
\be{uArgon}
   \uLJ \,=\, 4\eps\bigl((\sigma/r)^{12}-(\sigma/r)^6\bigr)\,, \qquad r>0\,,
\ee
be the classical Lennard-Jones potential with parameters $\eps,\sigma>0$.
Taking $\eps=\sigma=1$, i.e., working in reduced (dimensionless) units
with Boltzmann constant $\kB=1$,
we consider the \emph{truncated and shifted Lennard-Jones potential} 
\be{u-LJ-cut}
   u(r) \,=\, \begin{cases}
                 \uLJ(r)-\uLJ(2.5)\,, & 0<r<2.5\,,\\
                 0\,, & \phantom{0<}\ r\geq 2.5\,,
              \end{cases}
   \qquad
\ee
i.e., the Lennard-Jones potential is shifted, so that it becomes zero at 
$r=2.5$, and then extended continuously by zero for $r\geq 2.5$.
The corresponding ensemble is studied at two different state points, namely
\begin{itemize}
\item[(a)] the \emph{critical point} with counting density $\rho_0=0.304$ and 
temperature $T=1.316$, cf.~Smit~\cite{Smit92},
\item[(b)] a state point in the liquid phase close to the \emph{triple point} 
with counting density $\rho_0=0.8$ and temperature $T=1$, 
cf.~Hansen and Verlet~\cite{HaVe69}.
\end{itemize}
In both cases the radial distribution function is sampled on an
equidistant grid with mesh width $\Deltar r=0.02$; for state point (a)
we have data for $m=463$ grid points covering a radial interval $r\in(0,9.26]$,
for state point (b) we have $m=335$ grid points within the interval 
$(0,6.7]$. The latter interval is smaller than the former one,
because the density of the system is larger, and hence the simulation box
is smaller.
\begin{figure}
  \centerline{
    \includegraphics[width=6cm]{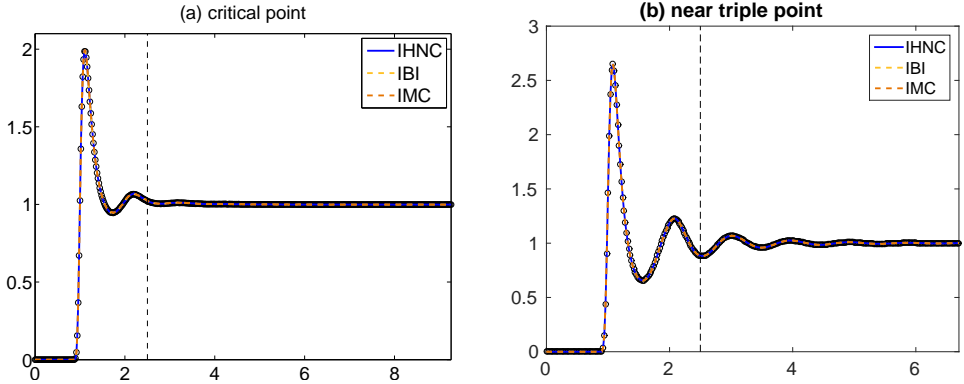}\qquad
    \includegraphics[width=6cm]{LJ_fast_tp_rdf.eps}}
  \caption{Truncated and shifted Lennard-Jones fluids: 
           radial distribution functions vs.\ radius}
  \label{Fig:LJ-rdfs}
\end{figure}
The given data are displayed as little circles in 
Figure~\ref{Fig:LJ-rdfs}. Note that the pair correlation function $h=g-1$ 
decays much faster at the critical point than near the triple point; as a 
consequence the inverse problem is much more difficult near the triple point.

To solve the inverse problem we tabulate the approximate potentials on 
the first $n=125$ grid points $r_i\in(0,2.5]$ of the same grid.
Because of the particular definition of the IBI and IMC schemes, 
cf.~\req{IBI} and \req{IMC}, 
only those $n$ grid points of the radial distribution function are used 
for these two methods; this radial interval is highlighted by the dashed lines
in Figure~\ref{Fig:LJ-rdfs}. For IHNC we use the full data displayed
in Figure~\ref{Fig:LJ-rdfs}. In all three iterative schemes 
the same potential~\req{PMF} of mean force is used as initial guess.

\begin{figure}
  \centerline{\phantom{l}
    \includegraphics[width=6cm]{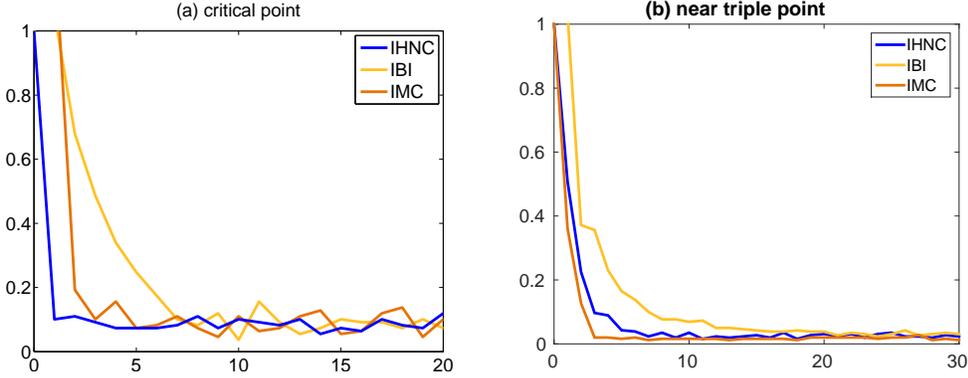} \qquad
    \includegraphics[width=6cm]{LJ_fast_tp_res_history.eps}}
  \caption{Truncated and shifted Lennard-Jones fluids: data fit vs.\ 
           iteration count}
  \label{Fig:LJ_res_history}
\end{figure}
The approximate radial distribution functions obtained by IBI, IHNC, and IMC,
respectively, are also shown in Figure~\ref{Fig:LJ-rdfs}. 
Essentially, all three functions are on top of each other in both plots, and 
they constitute perfect fits of the given data for each of the two state 
points. But IHNC and IMC require far less iterations to achieve this goal:
Figure~\ref{Fig:LJ_res_history} provides the corresponding iteration histories 
of the data fit, i.e., the graphs of the functions
\bdm
   k\,\mapsto\,\norm{G(u_k)-g}_\infty/\norm{G(u_0)-g}_\infty
\edm
for all three individual iteration schemes and for each of the two state 
points, respectively; here, $\norm{G(u_k)-g}_\infty$ measures the maximal
absolute error 
between \emph{all} given measurement data and the corresponding approximations. 
(For some obscure reason this measure of the data fit is slightly increasing
for IBI and IMC in the first iteration.)
From these plots it can be seen that IHNC requires about five
iterations at the critical point and eleven iterations near the triple point
to reach the global minimum of the data fit,
while IMC requires nine (or five?) iterations at the critical point and 
seven iterations near the triple point; the data fit of the two methods
is comparable, eventually.
IBI, on the other hand, needs ten iterations at the critical point
and more than twenty near the triple point.

\begin{figure}
  \centerline{
    \includegraphics[width=6cm]{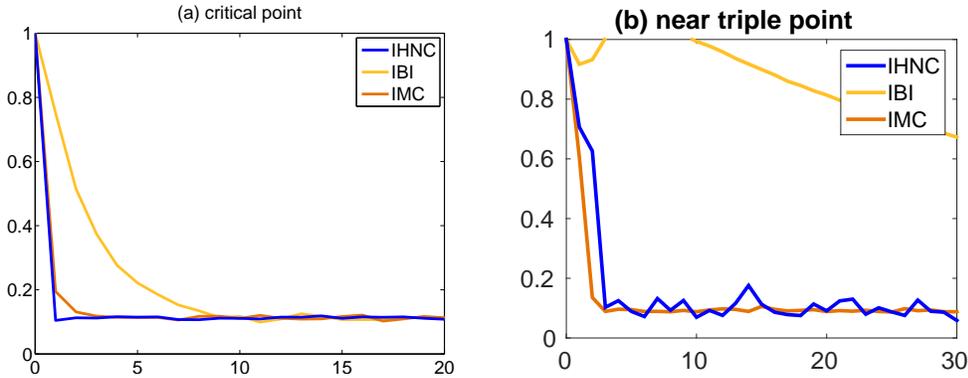} \qquad
    \includegraphics[width=6cm]{LJ_fast_tp_err_history.eps}}
  \caption{Truncated and shifted Lennard-Jones fluids: error \req{rel-error}
           vs.\ iteration count}
  \label{Fig:LJ_err_history}
\end{figure}
While the data fit is a straightforward indicator of the performance of 
the iterative schemes, the true error history is the really relevant quality 
measure. However, the latter is not available in practice.
It is the advantage of this particular example that the true solution is
known, so that the error history can be computed. 
For a particular potential $\utilde$ given on the grid $\Delta'$ 
we define the error measure
\begin{subequations}
\label{eq:epsilon}
\be{epsilon-a}
   \epsilon(\utilde) \,=\, 
   \Bigl(\Deltar r\sum_{i=1}^n g(r_i) \bigl(\utilde(r_i)-u(r_i)\bigr)^2 r_i^2
   \Bigr)^{1/2},
\ee
which approximates the weighted $L^2$ norm 
\be{epsilon-b}
   \epsilon(\utilde) \,\approx\,
   \Bigl(\int_0^\infty g(r) \bigl(\utilde(r)-u(r)\bigr)^2r^2\dr\Bigr)^{1/2}
\ee
\end{subequations}
of the error $\utilde-u$. This norm can be motivated by a more detailed 
analysis of the operator $G$, but this is beyond the scope of this paper; 
here we only emphasize that the factor $g$ in \req{epsilon-b} compensates 
for the divergence of the potentials as $r\to 0$.

\begin{figure}
  \centerline{
    \includegraphics[width=6cm]{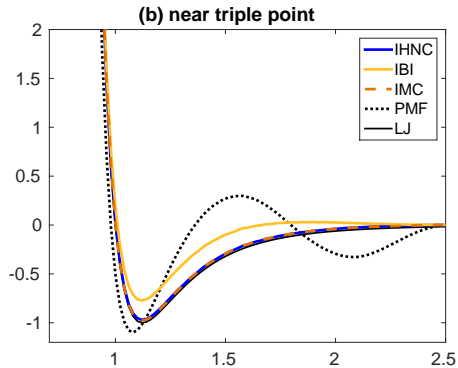}}
  \caption{Truncated and shifted Lennard-Jones fluid: reconstructed pair potentials vs.\ radius}
  \label{Fig:LJ_critical_potis}
\end{figure}
Figure~\ref{Fig:LJ_err_history} shows the relative error
\be{rel-error}
   k\,\mapsto\, \epsilon(u_k)/\epsilon(u_0)
\ee
as a function of the iteration count for all iterative schemes and both
state points, respectively.
This confirms that the particular iterates recommended above 
do indeed provide good approximations of the true 
truncated and shifted Lennard-Jones potential. Accordingly, IMC and IHNC
both converge very rapidly in much the same number of iterations, whereas 
IBI is doing significantly worse. To illustrate this further the corresponding 
reconstructions for the more difficult problem near the triple point are
shown in Figure~\ref{Fig:LJ_critical_potis}: This plot displays the 11th IHNC
iterate, the 7th IMC iterate and the 50th\,(!)\,IBI iterate, 
together with the true pair potential as a black solid line (marked ``LJ'') 
and the potential $u_0$ of mean force as a dotted line. 
As can be seen the IHNC and IMC approximations are hardly
distinguishable from the true truncated and shifted Lennard-Jones potential, 
while even after fifty iterations IBI is still relatively far off.

\subsection{Liquid argon}
\label{Subsec:Numerics-2}
As a second example we determine approximate pair potentials for argon,
using measurements by Schmidt and Tompson~\cite{ScTo68}
for a state point with temperature $T=-125^\circ$\,C and mass density
$0.982\,\gcm^3$ in the liquid phase near the 
critical point.\footnote{According to the US National Institute of Standards 
and Technology the critical point of argon is located at about temperature
$T=-122.3^\circ$\,C and mass density $0.536\,\gcm^3$; see
{\tt https://webbook.nist.gov/cgi/inchi?ID=C7440371\&Mask=4}.}
The corresponding data are given on an equidistant grid\footnote{For 
$r>10\,\Angstroem$ only every second data point is given in \cite{ScTo68};
the missing data have been filled in by linear interpolation.}
with $m=200$ grid points and 
mesh width $\Deltar r=0.1\,\Angstroem$. The approximate pair potentials are 
tabulated on the first $n=100$ grid points of this grid, and are taken to be
identically zero for $r>10\,\Angstroem$. 

\begin{figure}
  \centerline{\includegraphics[width=6cm]{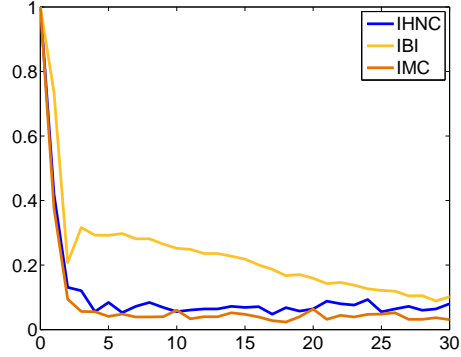}}
  \caption{Liquid argon: data fit vs.\ iteration count}
  \label{Fig:Ar_cp6_res_history}
\end{figure}

\begin{figure}
  \centerline{
     \includegraphics[height=4.8cm]{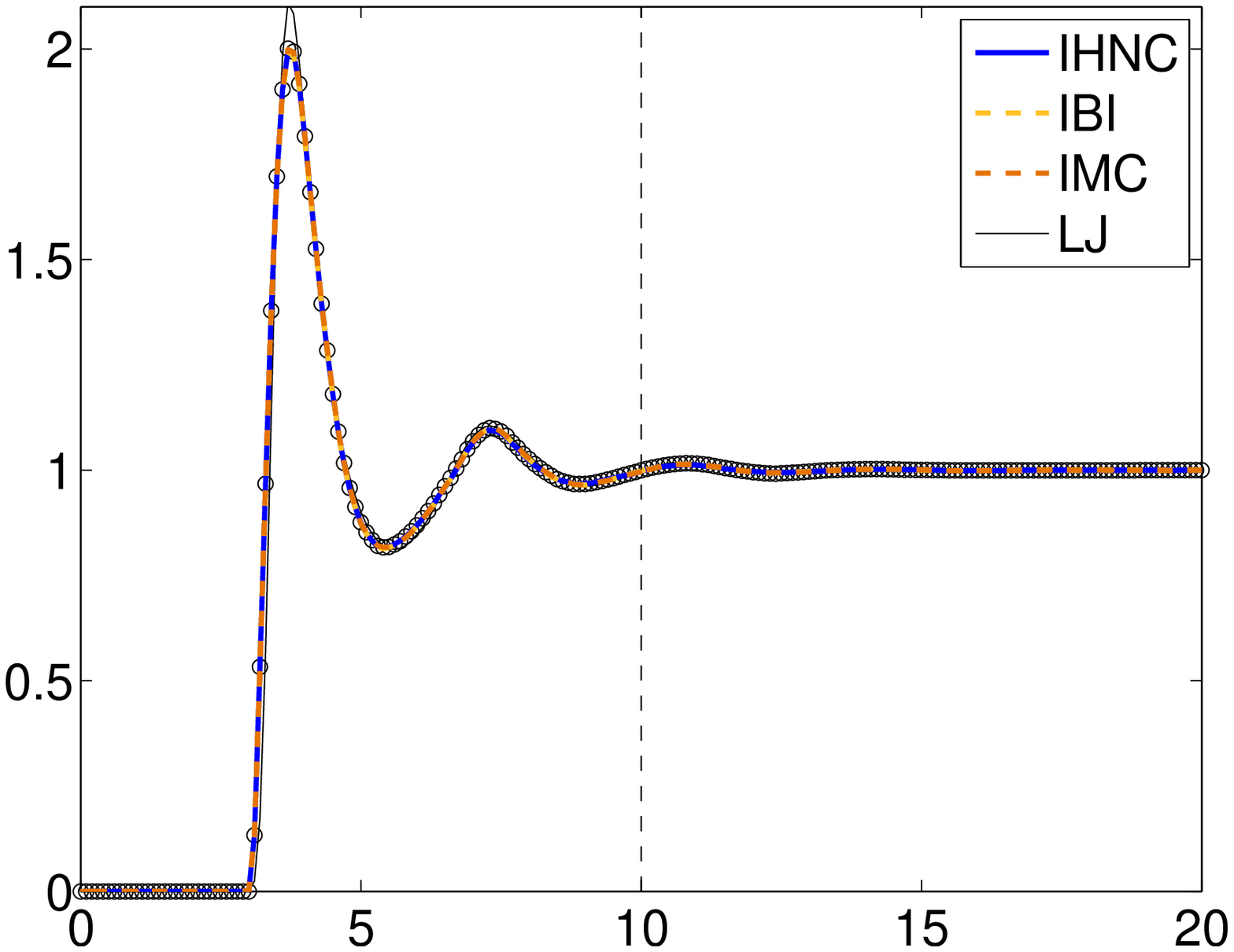} \qquad
     \includegraphics[height=4.8cm]{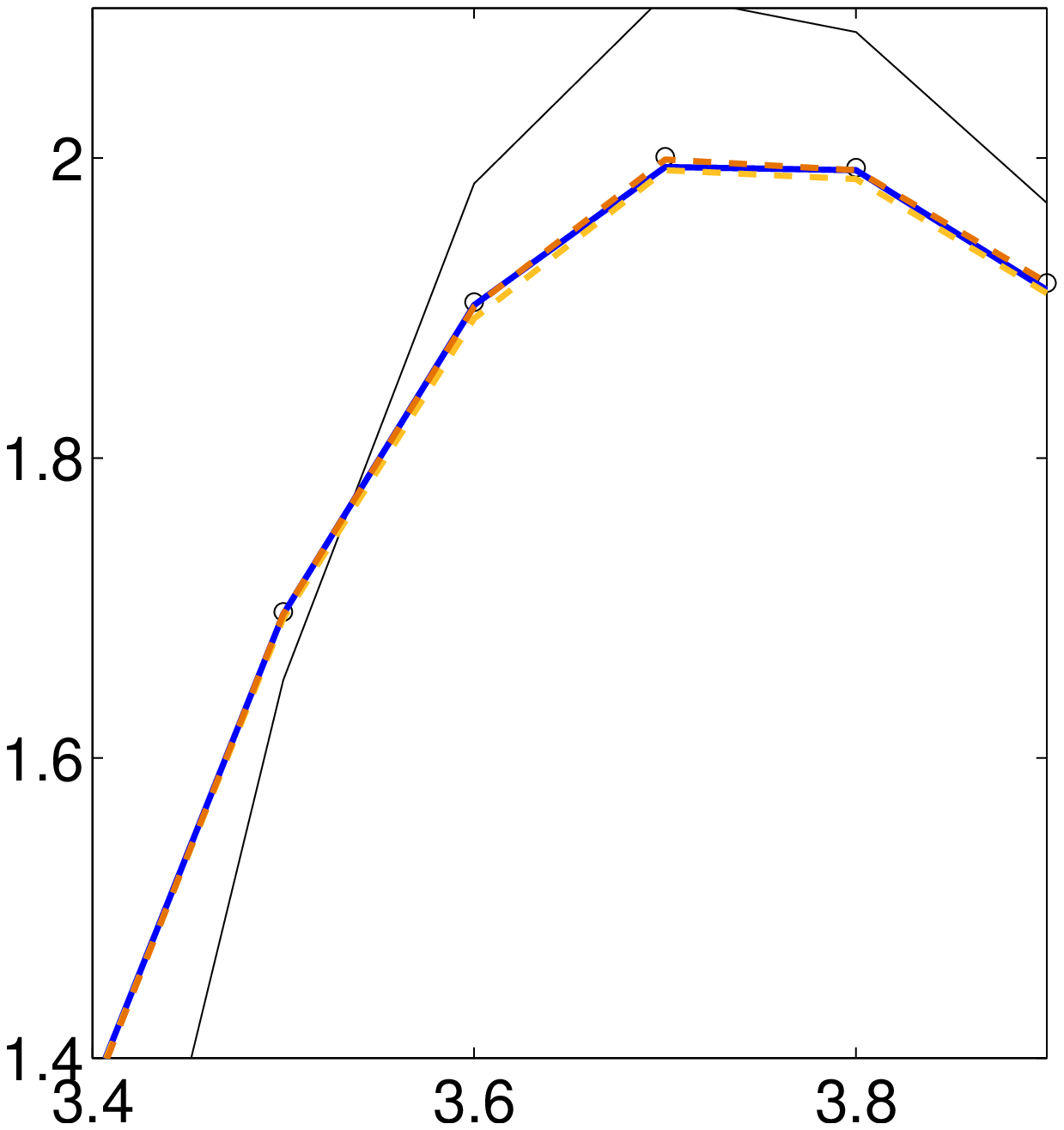}}  
  \caption{Liquid argon: radial distribution functions
           vs.\ radius (in {\rm$\Angstroem$}); detail view on the right}
  \label{Fig:Ar_cp6_rdf}
\end{figure}

\begin{figure}
  \centerline{\includegraphics[height=4.8cm]{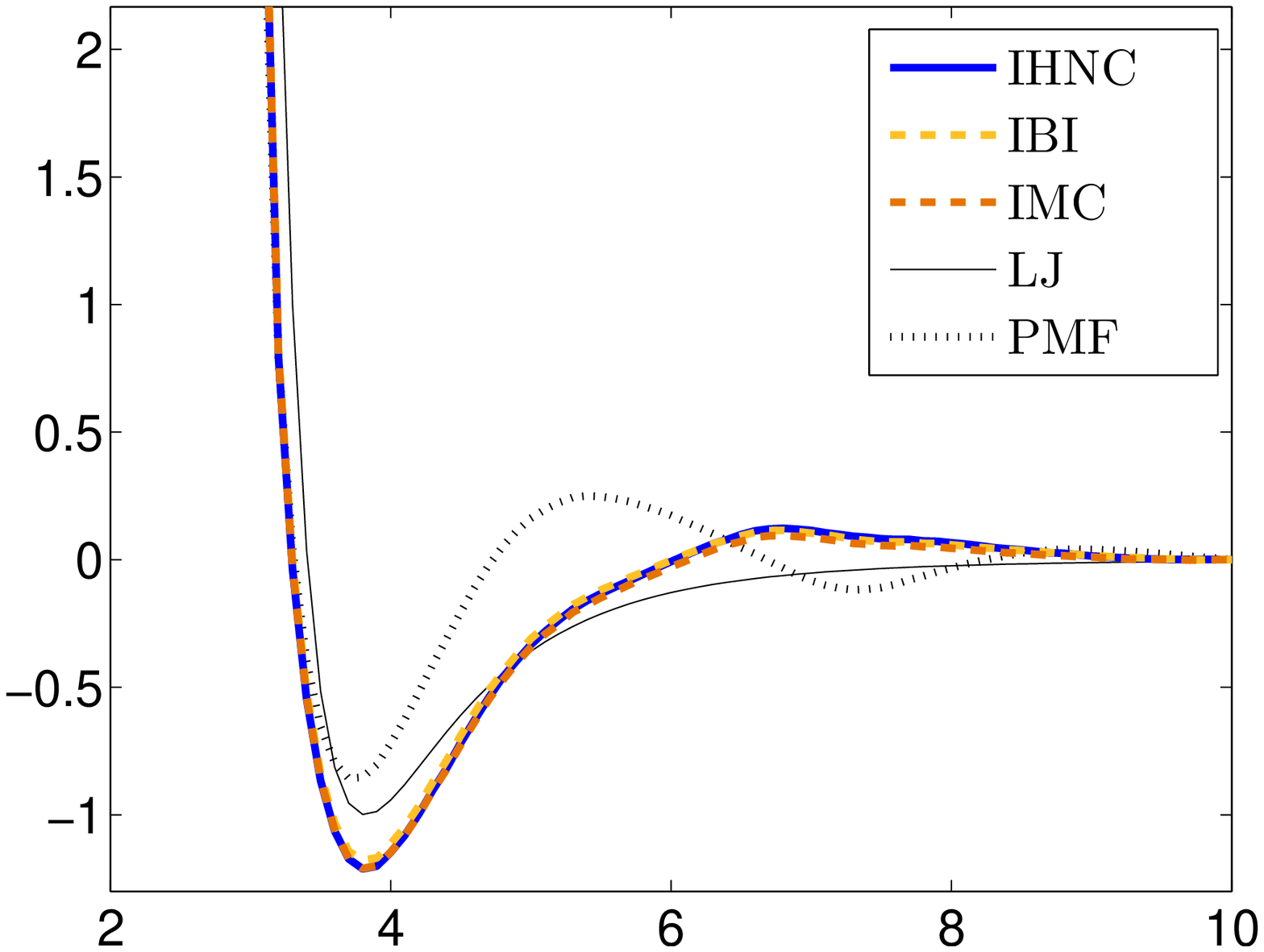}}  
  \caption{Liquid argon: approximate potentials
           (in units of $\eps$) vs.\ radius (in {\rm$\Angstroem$})}
  \label{Fig:Ar_cp6_potis}
\end{figure}

The iteration history shown in Figure~\ref{Fig:Ar_cp6_res_history} documents 
that, again, IHNC and IMC match the data much faster than IBI does: 
according to this plot six IHNC (five IMC) iterations should be sufficient, 
whereas IBI needs 39 iterations 
to achieve the same accuracy. 
Figure~\ref{Fig:Ar_cp6_rdf} presents the corresponding approximations of the
radial distribution function and Figure~\ref{Fig:Ar_cp6_potis} the 
corresponding potentials, together with the potential of mean force as dotted
line. As before, all computed approximations are very close to each other.


We have chosen argon as benchmark test case, because the interactions between 
argon atoms are widely considered to be well-described by the
Lennard-Jones pair potential~\req{uArgon} with parameters
\bdm
   \sigma=3.405\,\Angstroem \qquad \text{and} \qquad 
   \eps=119.8\,\kB\,{\rm J}\,,
\edm
cf., e.g., Tuckerman~\cite[p.~127]{Tuck10}.
This Lennard-Jones potential is given by the thin black line in 
Figure~\ref{Fig:Ar_cp6_potis}, but it differs quite a bit from our computed
pair potentials. In fact, the radial distribution function corresponding to 
this Lennard-Jones approximation, which has also been included in
Figure~\ref{Fig:Ar_cp6_rdf}, does not fit the measured data well, as can
easily be seen in the magnified detail in the right-hand plot.
Even the initial approximation from the potential
of mean force is doing better than that. So for this real-world example we
cannot trust this Lennard-Jones potential to be the ``ground truth'' 
to compare our numerical results to.

\subsection{The pressure constrained HNCGN scheme}
\label{Subsec:p-HNCGN}
Finally, we show some numerical results for $p$-HNCGN, i.e.,
the pressure constrained hypernetted-chain Gauss-Newton iteration
described in Section~\ref{Sec:extensions}. For this we use the same data set
for liquid argon as in the previous example and impose the corresponding value
$p=9918.7\,\kPa$
of the pressure reported by Mikolaj and Pings~\cite{MiPi67}. 

\begin{figure}
  \centerline{\includegraphics[width=6cm]{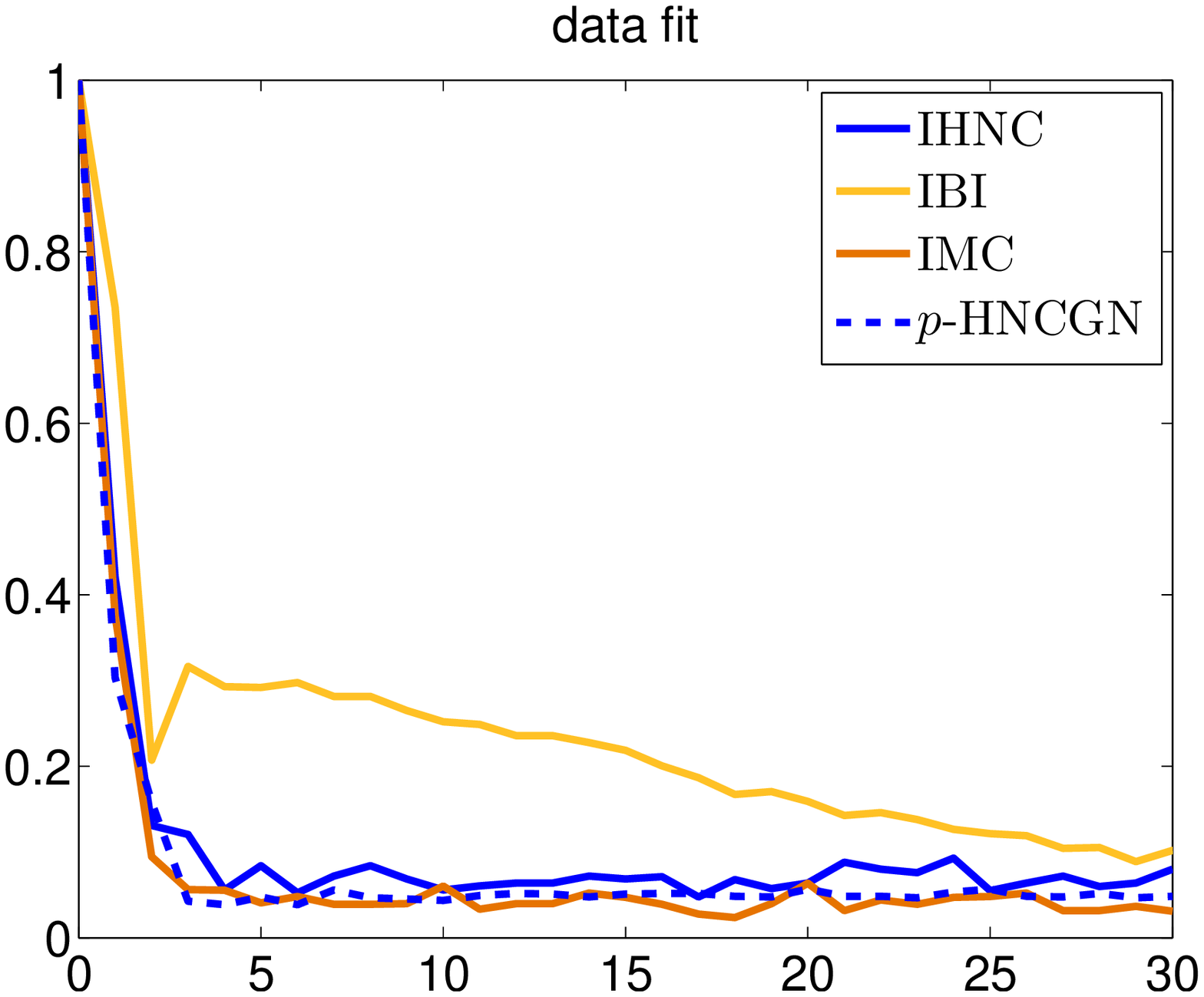}\qquad
              \includegraphics[width=6cm]{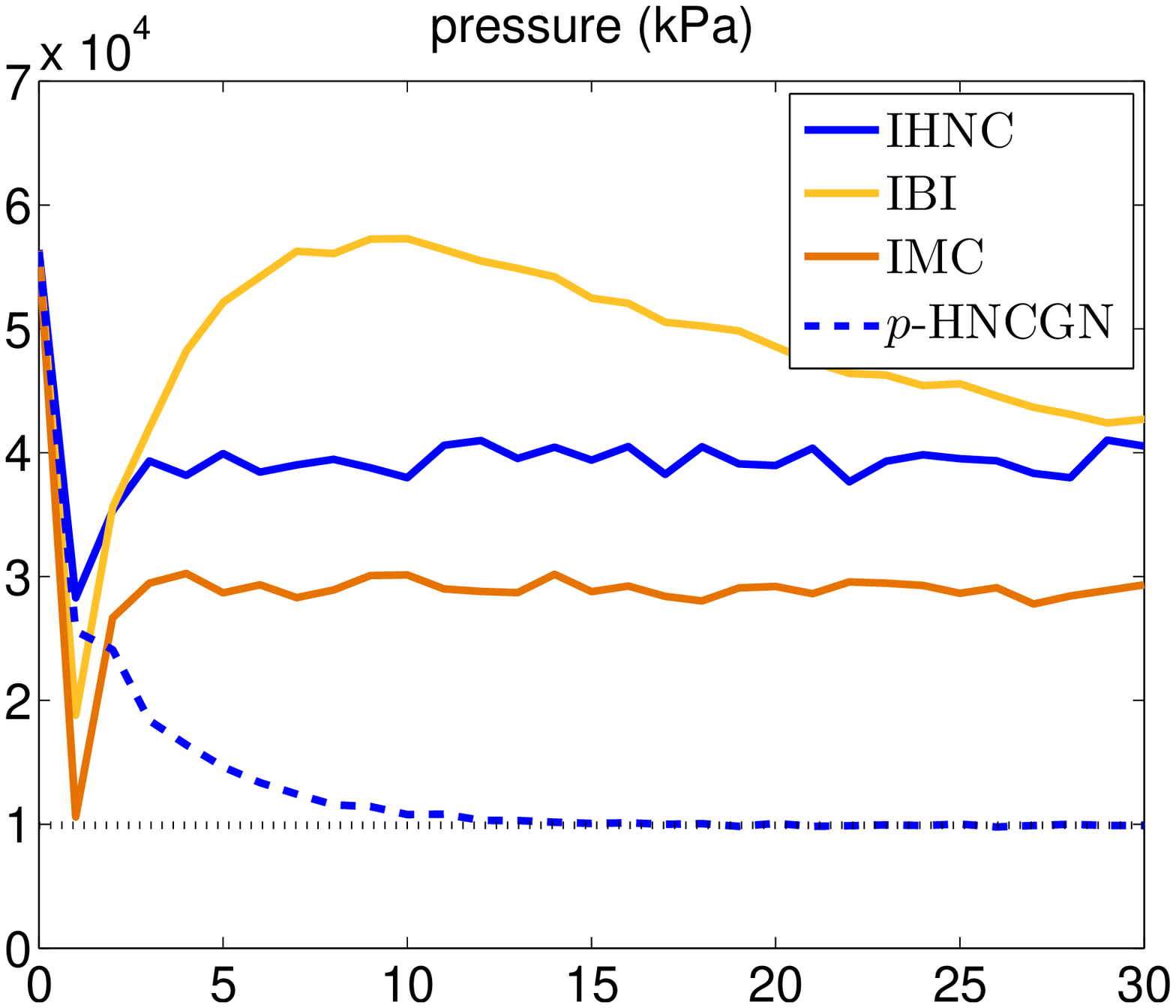}}
  \caption{Liquid argon: iteration history}
  \label{Fig:Ar_cp6p_it_history}
\end{figure}

\begin{figure}
  \centerline{\includegraphics[height=4.8cm]{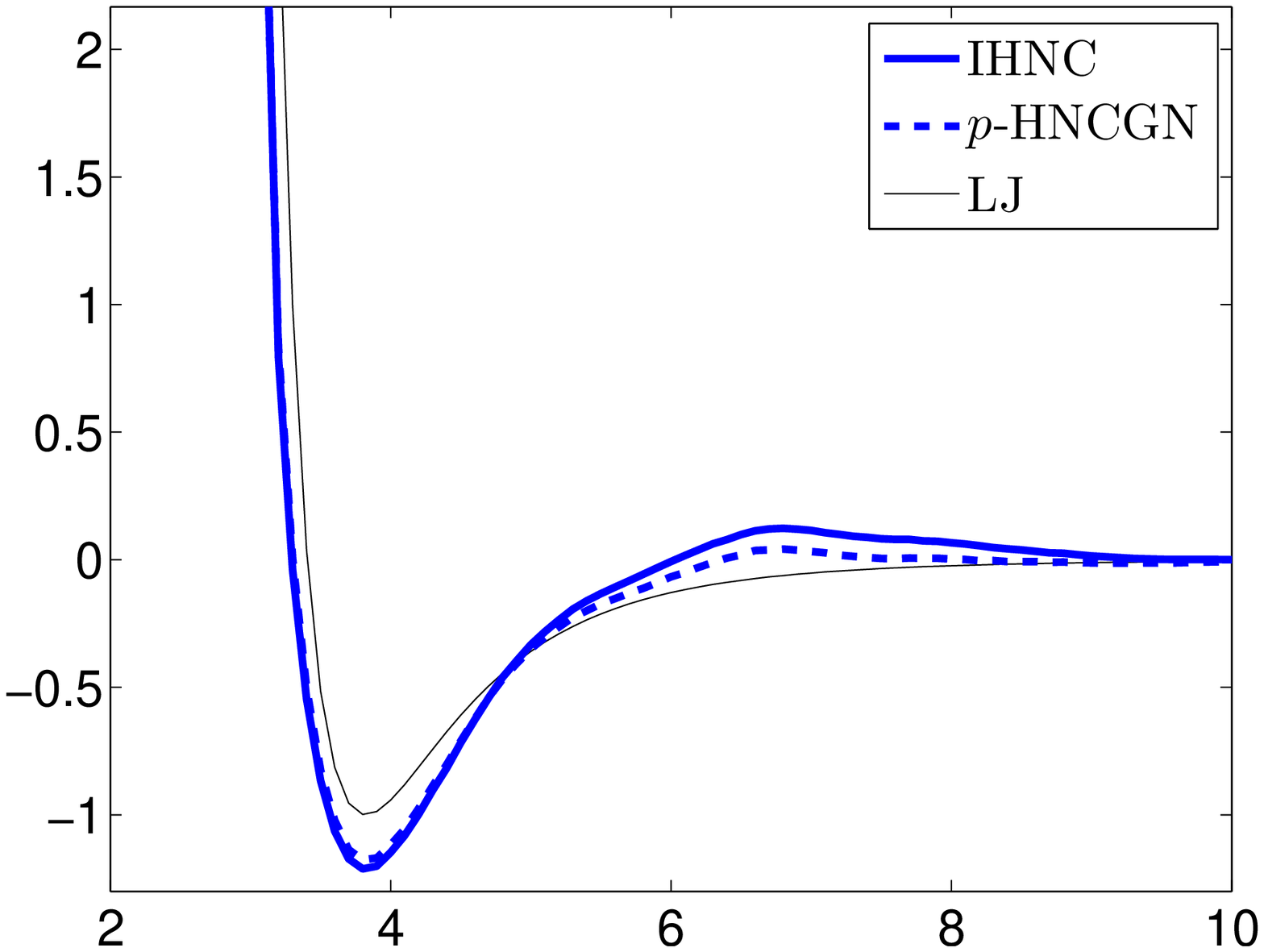}}  
  \caption{Liquid argon: approximate potentials
           (in units of $\eps$) vs.\ radius (in {\rm$\Angstroem$})}
  \label{Fig:Ar_cp6p_potis}
\end{figure}

Figures~\ref{Fig:Ar_cp6p_it_history} and \ref{Fig:Ar_cp6p_potis}
present the corresponding numerical results. 
In the left-hand plot of Figure~\ref{Fig:Ar_cp6p_it_history}
we recollect the data fit history of Figure~\ref{Fig:Ar_cp6_res_history} 
and add the corresponding graph for the performance of $p$-HNCGN: 
since the latter aims for a best possible fit of \emph{all} 200 data points
$g(r_j)$, it reaches a smaller value than all other competing methods. 

The right-hand plot of Figure~\ref{Fig:Ar_cp6p_it_history}, on the other hand,
displays the average pressure (as returned by {\sc gromacs}) of all 
corresponding ensembles for each individual iterate of the respective methods. 
The correct value of the pressure is indicated by the dotted horizontal line.
As can be seen, except for $p$-HNCGN all methods fail to reproduce this number
by a factor of three or more. $p$-HNCGN, on the other hand, 
achieves an excellent match of the target pressure after about 12 iterations. 

Assessing both plots of Figure~\ref{Fig:Ar_cp6p_it_history} we consider the
14th iterate of $p$-HNCGN to be ``optimal'', because it corresponds to the
first local minimum of the data fit after having reached a fairly accurate
value of the pressure. The corresponding pair potential is compared 
in Figure~\ref{Fig:Ar_cp6p_potis} with the Lennard-Jones reference and the 
IHNC potential from Figure~\ref{Fig:Ar_cp6_potis}. It can be seen that
the match of the pressure has a significant impact on the computed 
pair potential.

\begin{remark}
\label{Rem:compressibility}
\rm
Since $p$-HNCGN fits the data points of the measured radial distribution 
function, it does provide a good fit of the compressibility $\kappa_T$ of the
fluid as well, because the compressibility is given by the Kirkwood-Buff 
integral
\bdm
   \frac{\rho_0}{\beta}\,\kappa_T
   \,=\, 1 \,+\, 4\pi \rho_0\int_0^\infty h(r)\,r^2\dr\,,
\edm
which only depends on the pair correlation function $h=g-1$.
Therefore $p$-HNCGN is able to fit both the compressibility and the
pressure of a fluid to a reasonable accuracy. 
In the pertinent literature this has been considered impossible 
when using isotropic pair potentials,
compare, e.g., Wang, Junghans, and Kremer~\cite{WJK09}.
\fin
\end{remark}

\section{Conclusion}
\label{Sec:Conclusion}
We have determined new generalized Newton schemes for the inverse Henderson
problem, where we approximate the inverse of the Jacobian by the functional
derivative of the hypernetted-chain approximation of the pair potential.
These methods have about the same computational cost per iteration as IBI,
but need much less iterations near phase transitions. In terms of iteration
counts they are competitive to IMC, but the individual iterations are much 
cheaper
than the IMC ones, because no cross-correlations need to be evaluated in the
numerical simulation of the corresponding ensemble of particles.
While these methods turn out to be similar (but not identical)
to the LWR scheme of Levesque, Weis and Reatto, they are more flexible 
by construction, and can easily be modified, e.g., 
to also match the true pressure of the target ensemble.

We finally mention that one can also use the Percus-Yevick 
approximation instead of the hypernetted-chain approximation 
for the derivation of a corresponding generalized Newton method. 
The resulting scheme is very similar to \req{IHNC},
the only difference being that $\varphi_k$ is replaced by
$\varphi_k/y_k$, where $y_k$ is the cavity distribution function associated
with the $k$-th pair potential $u_k$. In our numerical experiments we found
the iteration~\req{IHNC} to perform better near phase transitions of the
truncated and shifted Lennard-Jones potentials than the corresponding
Percus-Yevick recursion, and therefore we have restricted our attention
to the IHNC scheme in this work.

In future work we plan to extend our methods to binary mixtures of different
fluids.

\section*{Appendix: The Wiener lemma}
\label{App:Wiener}
\renewcommand{\thesection}{\Alph{section}}
\setcounter{section}{1}
\setcounter{footnote}{1}
\setcounter{equation}{0}
For $\varrho$ defined in \req{walpha} we have shown in \cite{Hank18c}
that the space $\Lrhorrr$ of all functions $f:\R^3\to\R$, for which
\bdm
   \norm{f}_\Lrhorrr \,=\, 
   \esssup_{R\in\R^3}\,\varrho(|R|)\bigl|f(R)\bigr|\,<\,\infty
\edm
constitutes a Banach algebra with respect to convolution\footnote{Throughout 
this appendix we only consider functions of three variables, whether they 
be radial functions, or not. If $f\in\Lrhorrr$ is radially symmetric, then its
representation defined in $\R^+$ belongs to the Banach space $\Lrho$ 
introduced in \req{Linftyrho}. The three-dimensional Fourier transform 
of $f\in\Lrhorrr$ is denoted by $\fhat$.}. 
We can extend $\Lrhorrr$ to a Banach algebra $\Wrho$ with unit element $e$ 
(given by the delta distribution at the origin), using the canonical norm
\bdm
   \norm{\lambda e+f}_\Wrho \,=\, |\lambda| \,+\, \gamma\norm{f}_{\Lrhorrr}\,,
   \qquad \lambda\in\R, \ f\in \Lrhorrr\,,
\edm
where $\gamma>0$ is a small enough constant to make the norm of $\Wrho$ 
submultiplicative.

The standard Wiener lemma for the Fourier transform starts with a similar 
construction for the Banach algebra $L^1(\R^3)$ and states that if 
$f\in L^1(\R^3)$ is such that $1+\fhat\neq 0$, then 
\be{Wiener}
   (1+\fhat)^{-1} \,=\, 1-\chat
\ee
for some $c\in L^1(\R^3)$ with Fourier transform $\chat$, 
cf., e.g., J\"orgens~\cite{Joer82}\footnote{Note that we have deliberately
denoted this function by $c$; in fact, if $h$ is the radially symmetric
extension of the pair correlation function
and if $f=\rho_0 h$, then $c/\rho_0$ coincides with the 
direct correlation function in the Ornstein-Zernike relation~\req{c}; 
compare \req{c0hatchat}.}.
The weighted Wiener lemma which is required for the proof of 
Propositions~\ref{Prop:hnc} reads as follows.

\begin{lemma}
\label{Lem:Wiener}
Let $f\in \Lrhorrr$ be such that $1+\fhat\neq 0$. Then the function $c$
of \req{Wiener} belongs to $\Lrhorrr$.
If $f$ is a radial function, so is $c$.
\end{lemma}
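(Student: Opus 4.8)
The plan is to derive the weighted Wiener lemma from the classical one by a \emph{bootstrapping} (or \emph{smoothness transfer}) argument: we already know from \req{Wiener} that $c\in L^1(\R^3)$, so the only thing left to prove is that this $c$ additionally satisfies the weighted bound $\esssup_{R}\varrho(|R|)|c(R)|<\infty$. The key algebraic identity is obtained by rewriting \req{Wiener} as $(1+\fhat)(1-\chat)=1$, i.e. $\chat = \fhat - \fhat\,\chat$, which in physical space reads
\bdm
   c \,=\, f \,-\, f*c\,.
\edm
Here the convolution $f*c$ makes sense because $f,c\in L^1(\R^3)$. The strategy is to treat this as a fixed-point equation for $c$ in the Banach algebra $\Wrho$ and to show that the (already known) $L^1$ solution must in fact lie in $\Lrhorrr$.

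First I would record the pointwise submultiplicativity of the weight: since $\alpha>3$, one checks that $\varrho(|R|)\leq C\,\varrho(|R-S|)\,\varrho(|S|)$ for all $R,S\in\R^3$, with a constant $C$ depending only on $\alpha$ (this is exactly the inequality underlying the fact that $\Lrhorrr$ is a convolution algebra, already used in \cite{Hank18c}). Next I would split the kernel: fix a large radius $\rho$ and write $f=f_1+f_2$ with $f_1=f\,\eins_{\{|R|\le\rho\}}$ (the compactly supported, hence $\Lrhorrr$-bounded-and-small-in-$L^1$ for... no — rather: $f_2=f\,\eins_{\{|R|>\rho\}}$ has small $L^1$ norm). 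The resolvent identity for $(1+\fhat)^{-1}$ then lets me peel off $f_2$: choosing $\rho$ so that $\|f_2\|_{L^1}$ is small enough that $1+\widehat{f_2}$ is invertible in the unital $L^1$-algebra, I can write $c$ in terms of the resolvent of $f_2$ (which stays in $L^1$) composed with a correction coming from the \emph{compactly supported} piece $f_1$. Since $f_1$ has compact support and lies in $\Lrhorrr$, convolving the $L^1$ resolvent against $f_1$ produces something with the required weighted decay — here one uses that $\varrho$ grows only polynomially, so that $\varrho(|R|)|(k*f_1)(R)| \le \int \varrho(|R-S|)|k(R-S)|\,\varrho(|S|)|f_1(S)|\,\ds \cdot C$ type estimates close up, with the first factor controlled because outside the support of $f_1$ the remaining convolution against the $L^1$ function $k$ only has to beat a fixed polynomial.

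Concretely, the cleanest route is: from $c=f-f*c$ and the splitting $f=f_1+f_2$, derive $(e+f_2)*(e- \text{something})$... — more simply, set $d$ to be the $L^1$-resolvent of $f_2$ defined by $(1+\widehat{f_2})^{-1}=1-\dhat$, which exists in $L^1$ by the \emph{classical} Wiener lemma once $\|f_2\|_{L^1}$ is small (here one does not even need smallness, only $1+\widehat{f_2}\ne 0$, which follows since $\widehat{f_2}\to 0$ as $\rho\to\infty$ uniformly, so $|\widehat{f_2}|<1$ for $\rho$ large). Then a short computation with \req{Wiener} shows
\bdm
   c \,=\, d \,+\, (e-d)*f_1*(e-c)\,,
\ee
wait — I must not leave that dangling; let me instead just say: one obtains an identity expressing $c$ as $d$ plus a finite number of convolutions in which the compactly supported kernel $f_1\in\Lrhorrr$ appears at least once and every other factor is a fixed $L^1$ function ($d$, $c$, or $e$). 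Applying the weighted-times-$L^1$ estimate to each such term — the polynomial weight $\varrho$ distributes across one convolution onto the $\Lrhorrr$ factor $f_1$ and is absorbed into the $L^1$ norms of the others — yields $\varrho(|\cdot|)\,c\in L^\infty$, i.e. $c\in\Lrhorrr$. The radial-symmetry statement is immediate: if $f$ is radial then $\fhat$ is radial, hence $\chat=1-(1+\fhat)^{-1}$ is radial, hence $c$ is radial; alternatively, uniqueness of the solution of $c=f-f*c$ in $L^1$ together with rotation-invariance of the equation forces $c$ to be rotation-invariant.

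The main obstacle is making the ``peeling off the tail'' step rigorous: one must exhibit a genuine closed identity for $c$ in which the slowly-decaying part $f_2$ enters \emph{only} through a quantity ($d$) that is already under control in $L^1$, while the weight-sensitive work is done entirely against the compactly supported piece $f_1$. Getting the bookkeeping right — so that every term is a legitimate convolution of an $\Lrhorrr$ function with finitely many $L^1$ functions, with no leftover $f_2*f_2*\cdots$ strings that would need their own weighted estimate — is the crux; everything else is the routine polynomial-weight submultiplicativity estimate already established in \cite{Hank18c}.
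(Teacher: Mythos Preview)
The central estimate you rely on --- that convolving a compactly supported (or more generally $\Lrhorrr$) function against finitely many merely-$L^1$ factors yields something in $\Lrhorrr$ --- is false, and this is not a bookkeeping issue but a genuine obstruction. Concretely, take $f_1=\chi_{B(0,1)}$ and $k=\sum_{n\geq 2} n^{-2}|B_n|^{-1}\chi_{B_n}\in L^1(\R^3)$ with $B_n$ a small ball centred at $(2n,0,0)$; then $(f_1*k)(2n,0,0)=n^{-2}$, so $\varrho(2n)\,|(f_1*k)(2n,0,0)|\sim n^{\alpha-2}\to\infty$. The submultiplicativity $\varrho(|R|)\le C\varrho(|R-S|)\varrho(|S|)$ lets you push the weight onto \emph{one} factor of a convolution, but that factor then has to absorb a \emph{weighted} norm, not an $L^1$ norm; you cannot ``absorb $\varrho$ into the $L^1$ norms of the others''. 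Since your identity $c=d+(e-d)*f_1*(e-c)$ contains both the isolated $d$ (which you only place in $L^1$) and terms like $f_1*c$ with $c$ known only in $L^1$, neither the leading term nor the correction is under control in $\Lrhorrr$, and the bootstrap does not close.

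The paper's proof avoids exactly this trap by approximating $f$ in $L^1$ by a Schwartz function $u$, so that the resolvent $d$ of $u$ lies in $\Schwartz\subset\Lrhorrr$ for free; then $w=(e-d)*(u-f)$ is simultaneously in $\Lrhorrr$ \emph{and} small in $L^1$, and Corollary~4.3 of \cite{Hank18c} gives convergence of the Neumann series $\sum_{n\ge 1}w^{*n}$ \emph{within} $\Lrhorrr$. Every factor in the resulting closed formula $c=d-\Ws+d*\Ws$ already lives in $\Lrhorrr$, so no $\Lrhorrr*L^1$ estimate is ever needed. Your splitting can be partially repaired --- choosing $\rho$ large enough that $\|f_2\|_{L^1}<1$ does give $d\in\Lrhorrr$ by the same Neumann argument --- but then the remaining factor in the resolvent identity involves $(e+(e-d)*f_1)^{-1}$, and since $(e-d)*f_1$ is \emph{not} small in $L^1$ the Neumann series is unavailable and you are forced back onto the false $\Lrhorrr*L^1$ estimate.
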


\begin{proof}
We choose $u$ (not to mix up with the pair potential in the remainder of this
paper) from the standard Schwartz space $\Schwartz$, 
sufficiently close to $f$ in $L^1(\R^3)$ so that $1+\uhat\neq 0$. 
Then we can apply the classical Wiener lemma to deduce that
there exist $c,d\in L^1(\R^3)$ which satisfy \req{Wiener} and
\be{q-App}
   (1+\uhat)^{-1} \,=\, 1-\dhat\,,
\ee
respectively. Moreover, $d\to c$ in $L^1(\R^3)$ as $u\to f$ in $L^1(\R^3)$; 
see \cite{Joer82}. Evidently,
\bdm
   \dhat \,=\, \frac{\uhat}{1+\uhat}\ \in \Schwartz\,,
\edm
and hence, $d\in\Schwartz$, and
\be{w-App}
   w \,=\, (e-d)*(u-f)\ \in \Lrhorrr
\ee
with
\bdm
   \norm{w}_{L^1(\R^3)} 
   \,\leq\, \bigl(1+\norm{d}_{L^1(\R^3)}\bigr)\norm{u-f}_{L^1(\R^3)} \,<\, 1\,,
\edm
provided $u$ is sufficiently close to $f$. In \req{w-App} and below the symbol
$*$ refers to the standard three-dimensional convolution, i.e.,
\bdm
   w(R) \,=\, u(R)-f(R) - \int_{\R^3} d(R-R') \bigl(u(R')-f(R')\bigr)\dR'\,,
   \qquad R\in\R^3\,.
\edm
Since $\norm{w}_{L^1(\R^3)}$ has been shown to be less than 1, 
Corollary~4.3 in \cite{Hank18c} allows to conclude that the series 
\be{Ws}
   \Ws \,:=\, \sum_{n=1}^\infty W_n
\ee
of the $n$-fold autoconvolutions $W_n$ of $w$ converges in $\Lrhorrr$, and hence,
\bdm
   c_0 \,:=\, d - \Ws + d*\Ws \ \in \ \Lrhorrr\,.
\edm
It turns out that this very function $c_0$ coincides with $c$, for we have
\bdm
   \chat_0 \,=\, \dhat \,-\, (1-\dhat)\,\frac{\what}{1-\what}
   \,=\, \frac{\dhat-\what}{1-\what}\,,
\edm
and when inserting~\req{w-App} and \req{q-App} it follows that
\be{c0hatchat}
   \chat_0 \,=\, \frac{\dhat-(1-\dhat)(\uhat-\fhat)}{1-(1-\dhat)(\uhat-\fhat)}
   \,=\, \frac{\uhat-(\uhat-\fhat)}{1+\uhat-(\uhat-\fhat)} 
   \,=\, \frac{\fhat}{1+\fhat} \,=\, \chat\,,
\ee
as has been claimed. This shows that $c\in\Lrhorrr$.

If $f$ is a radial function, so is $\fhat$ and also $\chat$
according to \req{c0hatchat}. Hence, $c$ is a radial function, too.
\end{proof}

Motivated by \req{c0hatchat} we simply write
\be{cWiener}
   c \,=\, f*(e+f)^{-1}
\ee
for the solution $c$ of \req{Wiener} in the sequel. 
For the ease of completeness we also include the following result on
continuous dependence of $c\in\Lrhorrr$.

\begin{lemma}
\label{Lem:Wiener2}
Let $f\in\Lrhorrr$ satisfy the assumptions of Lemma~\ref{Lem:Wiener}, and let 
$c$ be given by \req{cWiener}. If $f_k\in\Lrhorrr$ is sufficiently close 
to $f$ in $\Lrhorrr$, then the Ornstein-Zernike relation~\req{Wiener}
with $f$ replaced by $f_k$ has a well-defined solution $c_k\in\Lrhorrr$,
and there holds
\bdm
   \norm{c_k-c}_\Lrhorrr \,\to\, 0 \qquad \text{as}\qquad
   \norm{f_k-f}_\Lrhorrr \,\to\, 0\,. 
\edm
\end{lemma}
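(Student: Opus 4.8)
\emph{Proof proposal.} The plan is to reduce the statement to the two standard facts that $\Wrho$ is a unital Banach algebra and that the set of invertible elements in a unital Banach algebra is open with continuous (indeed analytic) inverse map. By hypothesis $f$ satisfies the assumptions of Lemma~\ref{Lem:Wiener}, so by \req{cWiener} the element $c=f*(e+f)^{-1}\in\Lrhorrr$ is well-defined and $(e+f)^{-1}=e-c$ exists in $\Wrho$; in particular $e+f$ is an invertible element of $\Wrho$.

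First I would invoke the Neumann-series estimate: if $x$ is invertible in a unital Banach algebra and $\norm{y-x}\,\norm{x^{-1}}<1$, then $y$ is invertible with
\bdm
   y^{-1} \,=\, \sum_{n=0}^\infty \bigl(x^{-1}(x-y)\bigr)^n x^{-1}\,,
\edm
whence $\norm{y^{-1}-x^{-1}}\to 0$ as $\norm{y-x}\to0$, with a fully explicit bound. Applying this with $x=e+f$ and $y=e+f_k$, and using that by the definition of the norm on $\Wrho$ one has $\norm{(e+f_k)-(e+f)}_\Wrho=\gamma\norm{f_k-f}_\Lrhorrr$, it follows that once $\norm{f_k-f}_\Lrhorrr$ is small enough (the threshold being $1/(\gamma\norm{e-c}_\Wrho)$, which is what ``sufficiently close'' means) the element $e+f_k$ is invertible in $\Wrho$ and $(e+f_k)^{-1}\to(e+f)^{-1}$ in $\Wrho$.

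It then remains to identify the $\Lrhorrr$-component of $(e+f_k)^{-1}$. Writing $(e+f_k)^{-1}=\lambda_k e+g_k$ with $\lambda_k\in\R$ and $g_k\in\Lrhorrr$, and multiplying out $(e+f_k)(\lambda_k e+g_k)=e$, the unit component gives $\lambda_k=1$ while the $\Lrhorrr$-component gives $g_k+f_k+f_k*g_k=0$, i.e.\ $(e+f_k)*g_k=-f_k$ and hence $g_k=-f_k*(e+f_k)^{-1}=-c_k$ in the notation of \req{cWiener}; this shows in passing that $c_k\in\Lrhorrr$ is well-defined and solves \req{Wiener} with $f$ replaced by $f_k$ (one may also see the nonvanishing of $1+\fhat_k$ directly, since $f_k\to f$ in $\Lrhorrr\hookrightarrow L^1(\R^3)$ forces $\fhat_k\to\fhat$ uniformly and $1+\fhat$ is bounded away from zero). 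Finally, convergence in $\Wrho$ is by construction equivalent to convergence of the scalar parts together with convergence of the $\Lrhorrr$-parts; applying this to $(e+f_k)^{-1}=e-c_k\to e-c=(e+f)^{-1}$ yields $\norm{c_k-c}_\Lrhorrr\to0$, which is the claim.

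I do not anticipate a genuine obstacle: the only points needing care are matching the unit and $\Lrhorrr$ components to see that the non-scalar part of the inverse is exactly $-c_k$ in the sense of \req{cWiener}, and retaining the quantitative Neumann bound so that ``sufficiently close'' is made precise; both are routine. Should one wish to bypass the abstract openness statement, an alternative is to rerun the proof of Lemma~\ref{Lem:Wiener} with the Schwartz approximant $u$ chosen once and for all near $f$, and to follow the dependence of the series \req{Ws} on $f_k$ via Corollary~4.3 of \cite{Hank18c}; this is more laborious, and I would resort to it only if the constants in that corollary turned out not to be uniform enough for the clean argument above.
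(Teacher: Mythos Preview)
Your argument is correct. You reduce the claim to the standard fact that inversion is continuous on the open set of units in the unital Banach algebra $\Wrho$, and then read off the $\Lrhorrr$-component; the identification $(e+f_k)^{-1}=e-c_k$ is clean and the quantitative threshold you give is right.

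The paper takes a different, more hands-on route. Instead of appealing to submultiplicativity of the $\Wrho$-norm, it factors $(e+f_k)^{-1}=(e+f)^{-1}*(e+w_k)^{-1}$ with $w_k=(e+f)^{-1}*(f_k-f)$, expands the second factor as a Neumann series $e+W_{\Sigma,k}$, and controls this series in $\Lrhorrr$ via the mixed $L^1/\Lrhorrr$ estimate of Corollary~4.3 in \cite{Hank18c} (smallness is imposed on $\norm{w_k}_{L^1}$, not on $\norm{w_k}_\Wrho$). This yields the explicit formula
\bdm
   c_k-c \,=\, (f_k-f) \,-\, (f_k-f)*c \,+\, (f_k-f_k*c)*W_{\Sigma,k}\,,
\edm
from which the convergence follows. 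In effect the paper carries out by hand exactly the ``alternative'' you sketch in your last paragraph. Your abstract approach is shorter and conceptually cleaner; the paper's explicit computation has the minor advantage of giving a concrete expression for $c_k-c$ and of using the $L^1$ smallness condition (which, because of the embedding $\Lrhorrr\hookrightarrow L^1(\R^3)$, may yield a slightly different---though comparable---threshold for ``sufficiently close'').
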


\begin{proof}
We write 
\bdm
   (e+f_k)^{-1} \,=\, (e+f)^{-1}*(e+w_k)^{-1}
\edm
with
\be{A.w}
   w_k \,=\, (e+f)^{-1}*(f_k-f)\,,
\ee
and note that $\norm{w_k}_{L^1(\R^3)}\leq q<1$ for $\norm{f_k-f}_\Lrhorrr$ 
sufficiently small. Using~\req{Wiener} it follows that
\begin{align}
\nonumber
   c_k &\,=\, f_k*(e+f_k)^{-1}
        \,=\, f_k*(e+f)^{-1}*(e+w_k)^{-1}\\[1ex]
\nonumber
       &\,=\, f_k*(e-c)*(e+w_k)^{-1} 
        \,=\, f_k*(e-c)*(e+\Wsk)\\[1ex]
\label{eq:cnbound}
       &\,=\, f_k - f_k*c + (f_k-f_k*c)*\Wsk\,,
\end{align}
where $\Wsk$ is the series~\req{Ws} of the $n$-fold autoconvolutions of $w_k$.
Note that
\be{Wsbound}
   \norm{\Wsk}_\Lrhorrr \,\leq\, C\norm{w_k}_\Lrhorrr
\ee
for some $C>0$ which only depends on the upper bound $q$ of 
$\norm{w_k}_{L^1(\R^3)}$, cf.~\cite{Hank18c}.
Rewriting $c$ as $f*(e-c)$ by virtue of \req{Wiener} and \req{cWiener},
we conclude from \req{cnbound} that
\bdm
   c_k-c \,=\, f_k-f \,-\, (f_k-f)*c \,+\, (f_k-f_k*c)*\Wsk\,,
\edm
and hence, the assertion follows from \req{Wsbound} and \req{A.w}.
\end{proof}

\section*{Acknowledgements}
We are grateful to Gergely T{\'o}th for pointing out to us references
\cite{LWR85,Scho73,Toth07}.


\end{document}